\newcommand{\bp}{{\boldsymbol{p}}}
\newcommand{\bq}{{\boldsymbol{q}}}
\newcommand{\bv}{{\boldsymbol{v}}}
\newcommand{\bx}{{\boldsymbol{x}}}
\newcommand{\be}{{\boldsymbol{e}}}
\newcommand{\bone}{{\boldsymbol{1}}}
\newcommand{\cN}{{\cal N}}
\newcommand{\C}{{\mathbb C}}
\newcommand{\N}{{\mathbb N}}
\newcommand{\R}{{\mathbb R}}
\newcommand{\T}{{\mathbb T}}
\newcommand{\Z}{{\mathbb Z}}
\newtheorem{Theorem}{Theorem}[section]
\newtheorem{Corollary}[Theorem]{Corollary}
\newtheorem{Lemma}[Theorem]{Lemma}
\theoremstyle{definition}
\newtheorem{Definition}[Theorem]{Definition}
\newtheorem{Example}[Theorem]{Example}
\newtheorem{Remark}[Theorem]{Remark}
\newcommand{\footnoteremember}[2]{
  \footnote{#2}
  \newcounter{#1}
  \setcounter{#1}{\value{footnote}}
} \newcommand{\footnoterecall}[1]{
  \footnotemark[\value{#1}]
}
\newcommand{\ol}{\overline}
\newcommand{\bil}[2]{\langle{#1},{#2}\rangle}
\newcommand{\Hess}{{\rm Hess}}
\renewcommand{\subset}{\subseteq}
\DeclareTextFontCommand{\textnf}{\normalfont}
\newcommand{\marginnote}[1]{\vrule width0pt height0pt depth0pt
    \vadjust{\vbox to0pt{\vss\hbox to\hsize{\hskip\hsize\quad
    #1\hss}\vskip1.5pt}}}
\begin{document}

\title{A real algebra perspective on multivariate tight wavelet frames}

\author{
Maria Charina\footnoteremember{myfootnote}{Fakult\"at f\"ur Mathematik, TU
Dortmund, D--44221 Dortmund, Germany, $maria.charina@tu-dortmund.de$}, \ Mihai
Putinar \footnote{Department of Mathematics, University of California at Santa
Barbara, CA 93106-3080, USA},\\
 Claus Scheiderer \footnote{Fachbereich
Mathematik und Statistik, Universit\"at Konstanz, D--78457 Konstanz, Germany} \
and \ Joachim St\"ockler \footnoterecall{myfootnote} }

\maketitle

\begin{abstract}
Recent results from real algebraic geometry and the theory
of polynomial optimization are related in a new framework to the
existence question of multivariate tight wavelet frames whose generators have at least
one vanishing moment. Namely, several equivalent formulations of
the so-called Unitary Extension Principle (UEP) from \cite{RS95} are interpreted in terms of
hermitian sums of squares of certain nonnegative trigonometric polynomials and in terms
of semi-definite programming. The latter together with the results in \cite{LP,sch:mz}
answer affirmatively the long standing open
question of the existence of such tight wavelet frames in dimension $d=2$; we also
provide numerically efficient methods for checking their existence and actual construction in
any dimension. We exhibit a class of counterexamples in dimension $d=3$ showing that,
in general, the UEP property is not sufficient for the existence of tight wavelet frames.
On the other hand we provide stronger sufficient conditions for the existence of tight wavelet frames
in dimension $d \ge 3$ and illustrate our results by several examples.
\end{abstract}


\noindent {\bf Keywords:} multivariate wavelet frame, real algebraic geometry, torus, hermitian square,
polynomial optimization, trigonometric polynomial.\\

\noindent{\bf Math. Sci. Classification 2000:} 65T60, 12D15, 90C26, 90C22.

\section{Introduction}

Several fundamental results due to two groups of authors (I. Daubechies, B.
Han, A. Ron, Z. Shen \cite{DHRS03} and respectively C. Chui, W. He, J. St\"ockler
\cite{CHS04, CHS05}) lay at the foundation of the theory of tight wavelet frames and also
provide their characterizations. These characterizations allow on one hand
to establish the connection between frame constructions and the challenging
algebraic problem of existence of sums of squares representations (sos) of
non-negative trigonometric polynomials. On the other hand, the same characterizations
provide methods, however unsatisfactory from the practical
point of view, for constructing tight wavelet frames.

The existence and effective
 construction of tight frames, together with good estimates on the number of
 frame generators, are still open problems. One can easily be discouraged
by a general  result by Scheiderer in \cite{S99}, which implies that not all non-negative
trigonometric polynomials in the dimension $d \ge 3$ possess sos
representations. However, the main focus is on dimension $d=2$ and on
special non-negative trigonometric polynomials. This
motivates us to pursue the issue of existence of sos representations further.

It has been observed in \cite{CoifmanDonoho1995} that redundancy of wavelet
frames has advantages for applications in signal denoising - if the data is
redundant, then loosing some data during transmission does not necessarily
affect the reconstruction of the original signal. Shen et al. \cite{Shen2011}
use the tight wavelet frame decomposition to recover a clear image from a single
motion-blurred image. In \cite{JoBra} the authors show how to use
multiresolution wavelet filters $p$ and $q_j$ to construct irreducible
representations for the Cuntz algebra and, conversely, how to recover wavelet
filters from these representations. Wavelet and frame decompositions for
subdivision surfaces are one of the basic tools, e.g.,  for progressive
compression of $3-d$ meshes or interactive surface viewing
\cite{CS07,KSS,VisualComp}. Adaptive numerical methods based on wavelet frame
discretizations have produced very promising results \cite{CDD1,CDD2} when
applied to a large class of operator equations, in particular, PDE's and integral
equations. We list some existing constructions of compactly supported MRA wavelet tight
frames of $L_2(\R^d)$ \cite{CH,CHS01,DHRS03,HanMo2005,LS06,RS95, Selesnick2001}
that employ the Unitary Extension Principle. For any dimension and in the case
of a general expansive dilation matrix, the existence of tight wavelet frames is
always ensured by \cite{CCH,CS}, if the coefficients of the associated
refinement equation are real and nonnegative. A few other compactly
supported multi-wavelet tight frames are circulating nowadays, see \cite{CCH,
CS07,GGL}.

The main goal of this paper is to relate the existence of multivariate tight wavelet frames
to recent advances in
 real algebraic geometry and
the theory
of moment problems. The starting point of our
study is the so-called Unitary Extension Principle (UEP) from \cite{RS95}, a special case of the
above mentioned characterizations in \cite{CHS04, CHS05, DHRS03}. In section
\ref{sec:UEP} we first list several equivalent well-known formulations of the UEP
from wavelet and frame literature, but use the novel algebraic terminology to state them.
It has been already observed in \cite{LS06} that a sufficient condition for the existence
of tight wavelet frames satisfying UEP can be expressed in terms of
sums of squares representations of a certain nonnegative trigonometric polynomial. In
\cite[Theorem 3.4]{LS06}, the authors also provide an algorithm for the actual construction of
the corresponding frame generators. In subsection \ref{subsec:sumsofsquares}, we extend
the result of \cite[Theorem 3.4]{LS06} and obtain another equivalent formulation of UEP,
which combined with the results from \cite{sch:mz} guarantees the existence of UEP tight wavelet
frames in the two-dimensional case, see subsection \ref{subsec:existence}. We also exhibit there a
class of three-dimensional counterexamples showing that,
in general, the UEP conditions are not sufficient for the existence of tight wavelet frames.
In those examples, however, we make a rather strong assumption on the underlying refinable
function, which leaves hope that in certain other cases we will be able to show the
existence of such tight wavelet frames.  The novel,
purely algebraic equivalent formulation of the UEP in Theorem \ref{th:UEP_hermitian} is aimed at
better understanding the structure of tight wavelet frames.
The constructive method in \cite[Theorem 3.4]{LS06} yields frame generators of support
twice as large as the one of the underlying refinable function. Theorem \ref{th:UEP_hermitian} leads to
a numerically efficient method for frame constructions with no such restriction on the size
of their support. Namely, in subsection \ref{subsec:semi-definite}, we show how to reformulate
Theorem \ref{th:UEP_hermitian}  equivalently as a problem of semi-definite programming. This establishes a connection between
constructions of tight wavelet frames and moment problems, see \cite{HP,
lasserrebook, LP} for details.

In section \ref{subsec:sufficient},  we give sufficient
conditions for the existence of tight wavelet frames in dimension $d \ge 3$ and
illustrate our results by several examples of three-dimensional subdivision.
In section \ref{subsec:construction}, we discuss an
elegant method that sometimes simplifies the frame construction and allows to
determine the frame generators analytically. We illustrate this method on the example of
the so-called butterfly scheme from
\cite{GDL}.

{\bf Acknowledgements.} The authors are grateful to the Mathematical Institute at Oberwolfach
for offering optimal working conditions through the Research In Pairs program in 2011. The second author was partially
supported by the National Science Foundation Grant DMS-10-01071.

\section{Background and Notation}

\subsection{Real algebraic geometry}
Let $d\in\N$, let $T$ denote the $d$-dimensional anisotropic real
(algebraic) torus, and let $\R[T]$ denote the (real) affine
coordinate ring of $T$
$$
 \R[T]\>=\>\R\bigl[x_j,\,y_j\colon j=1,\dots,d\bigr]\big/
 \bigl(x_j^2+y_j^2-1\colon j=1,\dots,d\bigr).
$$
In other words, $T$ is the subset of $\R^{2d}$ defined by the
equations $x_j^2+y_j^2=1, 1 \leq j \leq d,$ and endowed with additional algebraic structure
which will become apparent in the following pages.
Rather than working with the above description, we will mostly employ the
complexification of $T$, together with its affine coordinate ring
$\C[T]=\R[T]\otimes_\R\C$. This coordinate ring comes with a natural
involution $*$ on $\C[T]$, induced by complex conjugation. Namely,
$$
 \C[T]\>=\>\C[z_1^{\pm1},\dots,z_d^{\pm1}]
$$
is the ring of complex Laurent polynomials, and $*$ sends $z_j$ to
$z_j^{-1}$ and is complex conjugation on coefficients. The real
coordinate ring $\R[T]$ consists of the $*$-invariant polynomials in
$\C[T]$, i.e. $\displaystyle p=\sum_{\alpha \in \Z^d} p(\alpha)
z^\alpha \in \R[T]$ if and only if $p(-\alpha)=\overline{p(\alpha)}$.

The group of $\C$-points of $T$ is $T(\C)=(\C^*)^d=\C^*\times\cdots \times\C^*$.
In this paper we often denote the group of $\R$-points of $T$ by $\T^d$.
Therefore,
$$
 \T^d\>=\>T(\R)\>=\>\{(z_1,\dots,z_d)\in(\C^*)^d\colon|z_1|=\cdots=
 |z_d|=1\}
$$
is the direct product of $d$ copies of the circle group $S^1$. The
neutral element of this group we denote by $\bone=(1,\dots,1)$.

Via the exponential map $\hbox{exp}$, the coordinate ring
$\C[T]=\C[z_1^{\pm1}, \dots,z_d^{\pm1}]$ of $T$ is identified with the
algebra of (complex) trigonometric polynomials. Namely,
$\hbox{exp}$ identifies $(z_1, \dots, z_d)$ with $\be^{-i \omega}:=(
e^{-i\omega_1}, \dots, e^{-i\omega_d})$. In the same way, the real coordinate ring $\R[T]$
is identified with the ring of real trigonometric polynomials,
i.e.\ polynomials with real coefficients in $\cos(\omega_j)$ and
$\sin(\omega_j)$, $j=1,\dots,d$.
\medskip

Let $M\in\Z^{d\times d}$ be a matrix with $\det(M)\ne0$, and write
$m:=|\det M|$. The finite abelian group
\begin{equation}\label{def:G}
G:=2\pi M^{-T}\Z^d/ 2\pi \Z^d
\end{equation}
is (via exp) a subgroup of $\T^d=T(\R)$. It is isomorphic to
$\Z^d/M^T\Z^d$ and has order $|G|=m$. Its character group is $G'=
\Z^d/ M\Z^d$, via the natural pairing
$$
 G\times G'\to\C^*,\quad\bil\sigma\chi=e^{i\sigma\cdot\chi},
 \quad \sigma\in G, \quad \chi\in G'.
$$
Here $\sigma\cdot\chi$ is the ordinary
inner product on $\R^d$, and $\bil\sigma \chi$ is a root of unity of
order dividing $m$. Note that the group $G$ acts on the coordinate
ring $\C[T]$ via multiplication on the torus
\begin{equation}\label{def:psigma}
   p\mapsto p^\sigma(\be^{-i\omega}):= p(\be^{-i(\omega+\sigma)}), \quad \sigma\in
   G, \quad \omega \in \R^d.
\end{equation}
The group action commutes with the involution~$*$, that is, $(p^*)
^\sigma=(p^\sigma)^*$ holds for $p\in\C[T]$ and $\sigma\in G$.

From the action of the group $G$ we get an associated direct sum
decomposition of $\C[T]$ into the eigenspaces of this action
$$
 \C[T]\>=\>\bigoplus_{\chi\in G'}\C[T]_\chi,
$$
where $\C[T]_\chi$ consists of all $p\in\C[T]$ satisfying $p^\sigma=
\bil\sigma\chi\,p$ for all $\sigma\in G$. For $\chi\in G'$ and $p\in
\C[T]$, we denote by $p_\chi$ the weight $\chi$ isotypical component
of~$p$. Thus,
$$
 p_\chi=\frac1m\sum_{\sigma\in G}\bil\sigma\chi\,p^{\sigma}
$$
lies in $\C[T]_\chi$, and we have $\displaystyle p=\sum_{\chi\in G'}p_\chi$.
For every $\chi\in G'$, we choose a lift $\alpha_\chi\in\Z^d$ such that
\begin{equation}\label{def:p_polyphase}
\tilde p_\chi:=z^{-\alpha_\chi}p_\chi
\end{equation}
 is $G$-invariant. The components $\tilde p_\chi$  are called
 {\em polyphase components} of
 $p$, see
 \cite{StrangNguyen}.
 \bigskip

\subsection{Wavelet tight frames}
A wavelet tight frame is a structured system of functions that has some special
group structure and is defined by the actions of translates and dilates on a
finite set of functions $\psi_j \in L_2(\R^d)$, $ 1 \le j \le N$. More
precisely, let $M\in\Z^{d\times d}$ be a general expansive matrix, i.e.
$\rho(M^{-1})<1$, or equivalently, all eigenvalues of $M$ are strictly larger
than $1$ in modulus, and let $m=|\det M|$.
  We define translation operators $T_\alpha$ on $L_2(\R^d)$
by $T_\alpha f=f(\cdot-\alpha) $, $\alpha\in \Z^d$, and
dilation (homothethy) $U_M$  by $U_M f=m^{1/2} f(M\cdot)$. Note that these
operators are isometries on $L_2(\R^d)$.

\begin{Definition}  \label{def:wavelet_tight_frame}
Let $\{\psi_j \ : \ 1 \le j \le N \}\subset L_2(\R^d)$.  The family
$$
  \Psi=\{U_M^\ell T_\alpha \psi_j \ : \ 1\le j \le N, \
 \ell \in \Z, \ \alpha \in \Z^d \}
$$
is a wavelet tight frame of $L_2(\R^d)$, if
\begin{equation}\label{def:parseval}
   \|f\|^2_{L_2}=\sum_{{1 \le j \le N, \ell \in \Z,} \atop
\alpha \in \Z^d} |\langle f,U_M^\ell T_\alpha \psi_j\rangle|^2 \quad \hbox{for
all} \quad f \in L_2(\R^d).
\end{equation}
\end{Definition}

The foundation for the construction of multiresolution wavelet basis or wavelet
tight frame is a compactly supported function $\phi \in L_2(\R^d)$
with the following properties.

\begin{itemize}
 \item[(i)] $\phi$ is refinable, i.e. there exists a finitely
 supported sequence $p=\left( p(\alpha)\right)_{\alpha \in \Z^d}$, $p(\alpha)\in \C$, such that
 $\phi$ satisfies
 \begin{equation} \label{eq:refinement_equation}
   \phi(x)= m^{1/2} \sum_{\alpha \in \Z^d} p(\alpha) U_M T_\alpha \phi(x),
   \quad x \in \R^d.
 \end{equation}
 Taking the Fourier-Transform
 $$
  \widehat{\phi}(\omega)=\int_{\R^d} \phi(x) e^{-i\omega \cdot x}
  dx
 $$
 of both sides of \eqref{eq:refinement_equation} leads to its equivalent form
 \begin{equation} \label{eq:F_refinement_equation}
  \widehat{\phi}(M^T\omega)=p(\be^{-i\omega})
  \widehat{\phi}(\omega), \quad
  \omega \in \R^d,
 \end{equation}
 where the trigonometric polynomial $p \in \C[T]$ is given by
 $$
  p(\be^{-i\omega})= \sum_{\alpha \in \Z^d} p(\alpha) e^{-i\alpha\omega},
  \qquad \omega\in\R^d.
 $$
The isotypical components $p_\chi$ of $p$ are given by
\begin{equation}\label{def:isotypical}
  p_\chi(\be^{-i\omega})= \sum_{\alpha\equiv \chi~{\rm mod}~M\Z^d} p(\alpha)
  e^{-i\alpha\omega},\qquad \chi\in G'.
\end{equation}

 \item[(ii)] One usually assumes that $\widehat{\phi}(0)=1$ by proper
normalization. This
 assumption on $\widehat{\phi}$ and \eqref{eq:F_refinement_equation} allow us to
 read all properties of $\phi$
 from the polynomial $p$, since   the refinement equation
 \eqref{eq:F_refinement_equation} then implies
 $$
    \widehat{\phi}(\omega)=\prod_{\ell=1}^\infty p(\be^{-i(M^T)^{-\ell} \omega}),
    \quad \omega \in \R^d.
 $$
The uniform convergence of this infinite product on compact sets is guaranteed
by $p(\bone)=1$.

\item[(iii)] One of the approximation properties of  $\phi$ is the requirement
that the translates $T_\alpha \phi$, $\alpha\in\Z^d$, form a partition of unity.
Then
\begin{equation}\label{identity:isotypical_at_one}
  p_\chi(\bone)=m^{-1} ,\qquad \chi\in G'.
\end{equation}
\end{itemize}

\noindent  The functions $\psi_j$, $j=1, \dots, N$, are assumed to
be of the form
\begin{equation}\label{def:psij}
 \widehat{\psi}_j(M^T\omega)=q_j(\be^{-i\omega}) \widehat{\phi}(\omega),
\end{equation}
where $q_j \in \C[T]$. These assumptions
imply that $\psi_{j}$ have compact support and, as in
\eqref{eq:refinement_equation}, are finite linear combinations of
$U_M T_\alpha\phi$.

\section{Equivalent formulations of UEP} \label{sec:UEP}

In this section we first recall the method called UEP (unitary extension
principle) that allows us to determine the trigonometric
polynomials $q_j$, $1\le j \le N$,
such that the family $\Psi$  in Definition \ref{def:wavelet_tight_frame} is a
wavelet tight frame of $L_2(\R^d)$, see \cite{DHRS03,RS95}.
 We also give several
equivalent formulations of UEP to link frame constructions with problems in
algebraic geometry and semi-definite programming.

We assume throughout this section
that $\phi\in L_2(\R^d)$
is a refinable function with respect to the expansive matrix
$M\in \Z^{d\times d}$, with trigonometric polynomial $p$ in
\eqref{eq:F_refinement_equation} and $\hat \phi(0)=1$, and
 the functions $\psi_j$ are defined as in \eqref{def:psij}.
We also make use of the definitions \eqref{def:G} for $G$
and \eqref{def:psigma} for $p^\sigma$, $\sigma\in G$.

\subsection{Formulations of UEP in wavelet frame literature}

Most formulations of the UEP are given in terms of identities for trigonometric
polynomials, see \cite{DHRS03,RS95}.

\begin{Theorem} \label{th:UEP} (UEP) Let the
trigonometric polynomial $p \in \C[T]$ satisfy $p(\bone)=1$. If the
trigonometric polynomials $q_j \in \C[T]$, $1 \le j \le N$, satisfy the
identities
\begin{equation}\label{id:UEP}
 \delta_{\sigma,\tau}-p^{\sigma*}p^{\tau}=
 \sum_{j=1}^N q_j^{\sigma*}q_j^{\tau},\qquad
 \sigma,\tau \in G,
\end{equation}
then the family  $\Psi$ is a wavelet tight frame of $L_2(\R^d)$.
\end{Theorem}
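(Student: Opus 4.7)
The plan is to reduce the tight-frame identity \eqref{def:parseval} to a pointwise Fourier-domain condition and then verify it by a telescoping argument driven by the UEP identities. First, for test functions $f$ whose Fourier transforms $\widehat f$ are smooth and compactly supported away from the origin (a dense subset of $L_2(\R^d)$), I would apply Plancherel together with a standard periodization (dual Gramian) calculation to rewrite, for each scale $\ell\in\Z$,
$$
A_\ell(f) \;:=\; \sum_{j=1}^N \sum_{\alpha\in\Z^d}\bigl|\langle f,U_M^\ell T_\alpha\psi_j\rangle\bigr|^2
$$
as a sum over shifts $\beta\in 2\pi(M^T)^\ell\Z^d$ of integrals of $\widehat f(\omega)\overline{\widehat f(\omega+\beta)}$ weighted by $\sum_{j=1}^N\widehat{\psi_j}((M^T)^{-\ell}\omega)\overline{\widehat{\psi_j}((M^T)^{-\ell}(\omega+\beta))}$.

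Second, I would use the two-scale relations $\widehat{\psi_j}(M^T\cdot)=q_j(\be^{-i\cdot})\widehat\phi(\cdot)$ and $\widehat\phi(M^T\cdot)=p(\be^{-i\cdot})\widehat\phi(\cdot)$ to reduce the summand at level $\ell$ to expressions involving $\widehat\phi((M^T)^{-\ell-1}\omega)$ multiplied by the values $q_j^\sigma$ and $p^\sigma$ for $\sigma\in G$. The structural observation driving the whole proof is that a shift $\beta\in 2\pi(M^T)^\ell\Z^d$ lies in the coarser lattice $2\pi(M^T)^{\ell+1}\Z^d$ if and only if its class in $G=2\pi M^{-T}\Z^d/2\pi\Z^d$ is trivial; the nonzero classes of $G$ parametrise exactly the ``new'' shifts that appear when passing from scale $\ell+1$ to scale $\ell$. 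This is the junction at which the UEP identities \eqref{id:UEP}, indexed by $\sigma,\tau\in G$, enter.

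Third, I would introduce the scaling-space bilinear quantity
$$
D_\ell(f) \;:=\; \sum_{\beta\in 2\pi(M^T)^\ell\Z^d}\int_{\R^d} \widehat f(\omega)\,\overline{\widehat f(\omega+\beta)}\;\widehat\phi((M^T)^{-\ell}\omega)\,\overline{\widehat\phi((M^T)^{-\ell}(\omega+\beta))}\;\frac{d\omega}{(2\pi)^d}.
$$
Writing \eqref{id:UEP} once with $\sigma=\tau$ (diagonal) and once with $\sigma\neq\tau$ (off-diagonal) supplies precisely the algebraic identity needed to match the contributions produced by the two-scale substitutions. A direct computation then yields the telescoping identity $A_\ell(f)=D_{\ell+1}(f)-D_\ell(f)$, with the finer-lattice factor $|\widehat\phi((M^T)^{-\ell-1}\cdot)|^2$ sitting on the $D_{\ell+1}$ side and the coarser-lattice factor $|\widehat\phi((M^T)^{-\ell}\cdot)|^2$ on the $D_\ell$ side.

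Finally, summing the telescoping identity over $\ell\in\Z$ reduces the theorem to the two boundary limits $\lim_{\ell\to+\infty}D_\ell(f)=\|f\|_{L_2}^2$ and $\lim_{\ell\to-\infty}D_\ell(f)=0$. The first limit follows from $\widehat\phi(\mathbf{0})=1$ and continuity of $\widehat\phi$, together with the compact support of $\widehat f$, which kills all off-diagonal terms $\beta\neq 0$ eventually while the diagonal term $\beta=0$ converges to $\|f\|_{L_2}^2$ by Plancherel. The hardest step I anticipate is the second limit together with the accompanying justification of interchanging $\sum_\ell$ with the integrals and with $\sum_\beta$: here one must exploit the expansiveness of $M$ and the fact that $\phi\in L_2(\R^d)$ implies integrability of the bracket function $\sum_\beta|\widehat\phi(\cdot+\beta)|^2$, combined with a dominated convergence argument on the dense test class. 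Once these two limits are in hand, $\sum_\ell A_\ell(f)=\|f\|_{L_2}^2$ extends by density to every $f\in L_2(\R^d)$, yielding \eqref{def:parseval}.
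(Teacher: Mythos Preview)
The paper does not give its own proof of Theorem~\ref{th:UEP}; it is stated with attribution to \cite{DHRS03,RS95} and then used as the starting point for the subsequent reformulations. Your sketch is essentially the standard dual-Gramian/telescoping argument from those references: rewriting $\sum_\alpha|\langle f,U_M^\ell T_\alpha\psi_j\rangle|^2$ via Plancherel and periodization, inserting the two-scale relations, using the identities \eqref{id:UEP} indexed by cosets $\sigma,\tau\in G$ to cancel the cross terms and produce $A_\ell(f)=D_{\ell+1}(f)-D_\ell(f)$, and finishing with the two boundary limits. So there is nothing to compare against in the present paper, and your outline matches the literature argument it cites.

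One minor caution on the last step: extending the Parseval identity from the dense test class to all of $L_2(\R^d)$ is not a pure density argument, since a priori one does not know that the analysis operator $f\mapsto(\langle f,U_M^\ell T_\alpha\psi_j\rangle)$ is bounded. The clean way (as in \cite{RS95,DHRS03}) is to observe that Parseval on the dense class shows this operator is an isometry there, hence extends to an isometry on $L_2$, and then to check that the extended operator still has the concrete coefficient form; alternatively one first proves a uniform Bessel bound. You have flagged the $\ell\to-\infty$ limit as the delicate point, which is correct, and the compact support of $\phi$ (hence smoothness and boundedness of $\widehat\phi$) supplies the needed control.
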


We next state another equivalent formulation of the Unitary Extension Principle in Theorem
\ref{th:UEP} in terms of the isotypical components $p_\chi$,
$q_{j,\chi}$ of the polynomials $p$, $q_j$. In the wavelet and frame literature, see e.g.
\cite{StrangNguyen}, this equivalent formulation of UEP is usually given in terms of the
polyphase components in \eqref{def:p_polyphase} of $p$ and $q_j$. The proof we
present here serves as an illustration of the algebraic structure behind wavelet
and tight wavelet frame constructions.

\begin{Theorem} \label{th:UEP_polyphase}
Let the trigonometric polynomial $p \in \C[T]$ satisfy $p(\bone)=1$. The
identities \eqref{id:UEP} are equivalent to
\begin{equation}\label{id:equiv_UEP_1}
\begin{array}{rcl}
 && \displaystyle p_{\chi}^* p_\chi+\sum_{j=1}^N q_{j,\chi}^* q_{j,\chi}=m^{-1},
 \quad \chi \in G', \\[12pt]
 && \displaystyle p_{\chi}^* p_{\eta}+\sum_{j=1}^N q_{j,\chi}^* q_{j,\eta}=0\qquad
 \chi,\eta \in G', \quad \chi \not=\eta.
 \end{array}
\end{equation}
\end{Theorem}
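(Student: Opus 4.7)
The plan is to prove the equivalence by recognizing both sides as the same identity written in the two dual bases of $\C[T]$ associated with the finite abelian group $G$ and its dual $G'$: the $G$-translates $p^\sigma$ versus the isotypical components $p_\chi$. Thus, the content of the theorem is really just Parseval/orthogonality on the finite abelian group $G$, applied to the bilinear identity \eqref{id:UEP}.

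The main computational tool will be the orthogonality relation
\[
  \frac1m\sum_{\sigma\in G}\overline{\bil\sigma\chi}\,\bil\sigma\eta
   \>=\>\delta_{\chi,\eta},\qquad \chi,\eta\in G',
\]
together with the formula $p_\chi=\frac1m\sum_{\sigma\in G}\bil\sigma\chi p^\sigma$ displayed in the excerpt, and the fact that the $*$-involution and the $G$-action commute.

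For the direction \eqref{id:UEP}$\Rightarrow$\eqref{id:equiv_UEP_1}, I would fix $\chi,\eta\in G'$, multiply \eqref{id:UEP} by $\frac{1}{m^2}\overline{\bil\sigma\chi}\,\bil\tau\eta$, and sum over all $\sigma,\tau\in G$. By the definition of the isotypical components and the fact that $(p^\sigma)^*=(p^*)^\sigma$, the term $p^{\sigma*}p^\tau$ collapses precisely to $p_\chi^*\,p_\eta$, and likewise $q_j^{\sigma*}q_j^\tau$ becomes $q_{j,\chi}^*\,q_{j,\eta}$. The $\delta_{\sigma,\tau}$ on the left reduces via the orthogonality relation to $\frac1m\delta_{\chi,\eta}$. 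Rearranging yields exactly the two identities of \eqref{id:equiv_UEP_1}, depending on whether $\chi=\eta$ or $\chi\ne\eta$.

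For the converse \eqref{id:equiv_UEP_1}$\Rightarrow$\eqref{id:UEP}, I would invert the Fourier transform: decompose $p=\sum_\chi p_\chi$, so that $p^\sigma=\sum_\chi \bil\sigma\chi^{-1}p_\chi$ (using $p_\chi^\sigma=\bil\sigma\chi p_\chi$), expand the product $p^{\sigma*}p^\tau$ as a double sum over $\chi,\eta\in G'$, and substitute the given identities \eqref{id:equiv_UEP_1} for the cross-terms $p_\chi^*p_\eta$. Again invoking orthogonality on $G'$ collapses the remaining double sum to a single $\delta_{\sigma,\tau}$, producing \eqref{id:UEP}.

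The routine calculation is all bookkeeping; the only real obstacle is keeping the conjugation/sign conventions of the pairing $\bil\sigma\chi=e^{i\sigma\cdot\chi}$ consistent throughout, so that the orthogonality sums produce $\delta_{\chi,\eta}$ (rather than $\delta_{\chi,-\eta}$) at exactly the right spots. Once the conventions are fixed, both directions are a single Parseval identity on $G$, and the hypothesis $p(\bone)=1$ plays no role beyond being inherited from the setting of Theorem~\ref{th:UEP}.
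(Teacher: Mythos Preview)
Your approach is correct and is essentially the same as the paper's: both arguments amount to finite Fourier inversion on $G$, using the orthogonality of characters to pass between the $G$-translates $\{p^\sigma\}_{\sigma\in G}$ and the isotypical components $\{p_\chi\}_{\chi\in G'}$. The paper's presentation differs only cosmetically---it first observes that the identities \eqref{id:UEP} are already determined by the case $\tau=0$ and then inverts the resulting single-index system via the character matrix $A=(\bil\sigma\chi)_{\sigma\in G,\chi\in G'}$ with $A^{-1}=m^{-1}A^*$, rather than carrying out your symmetric double sum over $(\sigma,\tau)$.
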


\begin{proof}
 Recall that
 $ \displaystyle{
   p=\sum_{\chi \in G'} p_{\chi}}$  and  $ \displaystyle{
   p_\chi=m^{-1} \sum_{\sigma \in G}\langle\sigma, \chi \rangle p^{\sigma}}$ imply
$$
   p^*=\sum_{\chi \in G'} p^*_{\chi} \quad \hbox{and} \quad  p^{\sigma*}=\sum_{\chi \in G'}
   (p^*_{\chi})^\sigma = \sum_{\chi \in G'} \langle\sigma, \chi\rangle
   p^*_{\chi}.
$$
 Thus, with $\eta'=\chi+\eta$ in the next identity, we get
$$
 p^{\sigma *} p = \sum_{\chi, \eta' \in G'} \langle \sigma, \chi\rangle p^*_\chi
 p_{\eta'}=\sum_{\eta \in G'}  \sum_{\chi \in G'}
 \langle\sigma, \chi\rangle p^*_\chi p_{\chi+\eta}.
$$
Note that  the isotypical components of $p^{\sigma *} p$ are given by
\begin{equation}\label{id:thmUEPpolyphase}
 (p^{\sigma *} p)_\eta =   \sum_{\chi \in G'}
 \langle\sigma, \chi\rangle p^*_\chi p_{\chi+\eta},\qquad \eta\in G'.
\end{equation}
Similarly for $q_j$. Therefore, we get that the identities \eqref{id:UEP} for $\tau=0$  are
equivalent to
\begin{equation} \label{id:equiv_UEP_1_aux}
    \sum_{\chi \in G'} \langle\sigma, \chi\rangle \left( p^*_\chi p_{\chi+\eta}+ \sum_{j=1}^N q^*_{j,\chi} q_{j,\chi+\eta}
   \right)=\delta_{\sigma,0} \delta_{\eta,0}, \quad \eta \in G', \quad  \sigma \in G.
\end{equation}
Note that the identities  \eqref{id:UEP} for $\tau \in G$ are redundant and it
suffices to consider only those for $\tau=0$. For fixed $\eta \in G'$, \eqref{id:equiv_UEP_1_aux} is a
system of $m$ equations indexed by $\sigma \in G$ in $m$ unknowns $\displaystyle
p_\chi^* p_{\chi+\eta}+ \sum_{j=1}^N q^*_{j,\chi} q_{j,\chi+\eta}$, $\chi \in
G'$. The corresponding system matrix $A=(\langle \sigma, \chi \rangle)_{\sigma
\in G, \chi \in G'}$ is invertible and $\displaystyle A^{-1}=m^{-1} A^*$. Thus,
\eqref{id:equiv_UEP_1_aux} is equivalent to \eqref{id:equiv_UEP_1}.
 \end{proof}

It is easy to see that  the identities in Theorem \ref{th:UEP} and in Theorem
\ref{th:UEP_polyphase} have equivalent matrix formulations.

\begin{Theorem}\label{th:matrixUEP}
The identities \eqref{id:UEP} are equivalent to
\begin{equation}\label{identity:UEPmatrixform}
  U^*U=I_{m}
\end{equation}
with
$$
   U^*=\left[
  \begin{matrix} p^{\sigma*}& q_1^{\sigma*} &\cdots& q_N^{\sigma*}\end{matrix}
  \right]_{\sigma\in G} \in M_{m \times (N+1)}(\C[T]),
$$
and are also equivalent to
\begin{equation}\label{identity:UEPmatrixformpoly}
  \widetilde{U}^*\widetilde{U}=m^{-1}I_{m},
\end{equation}
with
$$
   \widetilde{U}^*=\left[
  \begin{matrix} \tilde{p}^*_{\chi}&
  \tilde{q}_{1,\chi}^{*} &\cdots& \tilde{q}_{N,\chi}^{*}\end{matrix}
  \right]_{\chi \in G'} \in M_{m \times (N+1)}(\C[T]).
$$
\end{Theorem}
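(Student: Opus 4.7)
\noindent\emph{Proof proposal.}
The plan is to interpret both matrix equations entrywise and then read off that they reproduce the scalar identities already proved equivalent in Theorems~\ref{th:UEP} and~\ref{th:UEP_polyphase}. The $*$ on matrices is conjugate transpose in the sense of the involution on $\C[T]$: $(A^*)_{i,j}=(A_{j,i})^*$. So from the description of $U^*$, the matrix $U\in M_{(N+1)\times m}(\C[T])$ has entries $U_{0,\sigma}=p^{\sigma}$ and $U_{j,\sigma}=q_j^{\sigma}$ for $1\le j\le N$, $\sigma\in G$.

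First I would check the equivalence between \eqref{id:UEP} and \eqref{identity:UEPmatrixform}. A direct computation gives
$$
 (U^*U)_{\sigma,\tau}\>=\>\sum_{k=0}^{N}(U^*)_{\sigma,k}\,U_{k,\tau}
  \>=\>p^{\sigma*}p^{\tau}+\sum_{j=1}^{N}q_j^{\sigma*}q_j^{\tau},
 \qquad \sigma,\tau\in G,
$$
so $U^*U=I_m$ is literally the system \eqref{id:UEP}. No further work is needed here.

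For the equivalence between \eqref{id:UEP} and \eqref{identity:UEPmatrixformpoly}, I would go through the polyphase formulation of Theorem~\ref{th:UEP_polyphase}. Using $\tilde p_\chi=z^{-\alpha_\chi}p_\chi$ (and the analogous definition $\tilde q_{j,\chi}=z^{-\alpha_\chi}q_{j,\chi}$) together with $(z^{-\alpha_\chi})^*=z^{\alpha_\chi}$, we obtain $\tilde p_\chi^{*}=z^{\alpha_\chi}p_\chi^{*}$ and $\tilde q_{j,\chi}^{*}=z^{\alpha_\chi}q_{j,\chi}^{*}$. Hence
$$
 (\widetilde U^{*}\widetilde U)_{\chi,\eta}
  \>=\>z^{\alpha_\chi-\alpha_\eta}\Bigl(p_\chi^{*}p_\eta+\sum_{j=1}^{N}q_{j,\chi}^{*}q_{j,\eta}\Bigr),\qquad \chi,\eta\in G'.
$$
On the diagonal the prefactor $z^{\alpha_\chi-\alpha_\chi}=1$ and the equation $(\widetilde U^{*}\widetilde U)_{\chi,\chi}=m^{-1}$ is the first line of \eqref{id:equiv_UEP_1}. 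Off the diagonal, the monomial $z^{\alpha_\chi-\alpha_\eta}$ is a unit in $\C[T]$, so the vanishing of $(\widetilde U^{*}\widetilde U)_{\chi,\eta}$ is equivalent to the second line of \eqref{id:equiv_UEP_1}. Therefore \eqref{identity:UEPmatrixformpoly} is equivalent to \eqref{id:equiv_UEP_1}, which by Theorem~\ref{th:UEP_polyphase} is equivalent to \eqref{id:UEP}.

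The argument is purely bookkeeping; the only minor points to get right are the action of the involution on entries like $z^{-\alpha_\chi}$ and the observation that the unit factor $z^{\alpha_\chi-\alpha_\eta}$ can be canceled without loss of information. No real obstacle is anticipated.
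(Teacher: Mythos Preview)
Your proposal is correct and is exactly the straightforward entrywise verification the paper has in mind; the paper itself does not give a proof but simply remarks that the matrix identities are easily seen to be equivalent reformulations of the scalar identities in Theorems~\ref{th:UEP} and~\ref{th:UEP_polyphase}. Your handling of the unit factor $z^{\alpha_\chi-\alpha_\eta}$ in the off-diagonal entries is the only point that requires a word of justification, and you have addressed it.
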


\begin{Remark}
The identities \eqref{identity:UEPmatrixform} and
\eqref{identity:UEPmatrixformpoly} connect the construction of $q_1,\ldots,q_N$
to the following matrix extension problem: extend the first row
$(p^\sigma)_{\sigma\in G}$ of the polynomial matrix $U$ (or $(\widetilde
p_\chi)_{\chi\in G'}$ of $\widetilde U$) to a rectangular $(N+1)\times m$
polynomial matrix satisfying \eqref{identity:UEPmatrixform} (or
\eqref{identity:UEPmatrixformpoly}). There are two major differences
between the identities
\eqref{identity:UEPmatrixform} and \eqref{identity:UEPmatrixformpoly}.
While the first column $(p,q_1,\ldots,q_N)$ of  $U$ determines all
other columns of $U$ as well, the columns of the matrix $\widetilde U$
can be
chosen independently, see \cite{StrangNguyen}. All entries of $\widetilde U$,
 however, are forced to be $G$-invariant trigonometric polynomials.
\end{Remark}

The following simple consequence of the above results provides a necessary condition
for the existence of UEP tight wavelet frames.

\begin{Corollary}\label{cor:UEP_matrix}
Let the trigonometric polynomial $p \in \C[T]$ satisfy  $p(\bone)=1$. For the
existence of trigonometric polynomials $q_j$ satisfying \eqref{id:UEP}, it is
necessary that the sub-QMF condition
\begin{equation}\label{id:subQMF}
1-\sum_{\sigma\in G}p^{\sigma*}p^\sigma\>\ge\>0
\end{equation}
holds on $\T^d$. In particular, it is necessary that $1-p^*p$ is non-negative on
$\T^d$.
\end{Corollary}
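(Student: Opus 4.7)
The plan is to view the UEP identity \eqref{id:UEP} pointwise as a matrix identity over $\C$ and exploit the fact that its right-hand side is manifestly positive semi-definite. For fixed $\omega\in\T^d$, I regard \eqref{id:UEP} as an equality of $m\times m$ Hermitian matrices indexed by $\sigma,\tau\in G$, namely
$$
 I_m - A(\omega) \>=\> \sum_{j=1}^N B_j(\omega),
$$
with $A(\omega)=\bigl(p^{\sigma*}(\be^{-i\omega})p^\tau(\be^{-i\omega})\bigr)_{\sigma,\tau}$ and $B_j(\omega)=\bigl(q_j^{\sigma*}(\be^{-i\omega})q_j^\tau(\be^{-i\omega})\bigr)_{\sigma,\tau}$. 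Each $B_j(\omega)$ is the rank-one outer product $w_jw_j^*$ with $w_j=\bigl(\overline{q_j^\sigma(\be^{-i\omega})}\bigr)_{\sigma\in G}$, hence PSD, and similarly $A(\omega)=vv^*$ with $v=\bigl(\overline{p^\sigma(\be^{-i\omega})}\bigr)_{\sigma\in G}$.

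Because the right-hand side is a sum of PSD matrices, I conclude $I_m\succeq vv^*$. The operator norm of the rank-one Hermitian matrix $vv^*$ equals $\|v\|^2=\sum_{\sigma\in G}|p^\sigma(\be^{-i\omega})|^2$, so the domination $vv^*\preceq I_m$ is equivalent to $\sum_{\sigma\in G}|p^\sigma(\be^{-i\omega})|^2\le 1$. Since $\omega$ was arbitrary, this is precisely the sub-QMF inequality \eqref{id:subQMF} on $\T^d$.

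The \emph{in particular} part then follows by specializing \eqref{id:UEP} to $\sigma=\tau=0$, which directly yields $1-p^*p=\sum_{j=1}^N q_j^*q_j\ge 0$ on $\T^d$. I do not anticipate any substantial obstacle: the proof reduces to the elementary linear-algebra fact that a rank-one Hermitian $vv^*$ is dominated by the identity if and only if $\|v\|\le 1$. One could equivalently invoke Theorem~\ref{th:matrixUEP} and observe that $U^*U=I_m$ means $U(\be^{-i\omega})$ is an isometric embedding $\C^m\hookrightarrow\C^{N+1}$, so $U(\be^{-i\omega})U(\be^{-i\omega})^*$ is an orthogonal projection whose $(0,0)$ diagonal entry $\sum_{\sigma\in G}|p^\sigma(\be^{-i\omega})|^2$ must lie in $[0,1]$.
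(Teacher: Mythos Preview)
Your argument is correct and is essentially the reasoning the paper has in mind: the corollary is stated without proof as a ``simple consequence of the above results,'' and your derivation via the pointwise matrix form of \eqref{id:UEP} (equivalently, via Theorem~\ref{th:matrixUEP} and the observation that $UU^*$ is an orthogonal projection) is precisely that consequence.
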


Next, we  give an example of a trigonometric polynomial $p$ satisfying
$p(\bone)=1$,  but for which the  corresponding
polynomial $f$ is negative for some $\omega \in \R^3$.

\begin{Example} Consider
\begin{eqnarray*}
 p(z_1,z_2,z_3)&=&6z_1z_2z_3 \left(\frac{1+z_1}{2}\right)^2\left(\frac{1+z_2}{2}\right)^2\left(\frac{1+z_3}{2}\right)^2
\left(\frac{1+z_1z_2z_3}{2}\right)^2 - \\
&& \frac{5}{4}z_1 \left(\frac{1+z_1}{2}\right)\left(\frac{1+z_2}{2}\right)^3\left(\frac{1+z_3}{2}\right)^3
\left(\frac{1+z_1z_2z_3}{2}\right)^3 -\\
&&  \frac{5}{4}z_2 \left(\frac{1+z_1}{2}\right)^3\left(\frac{1+z_2}{2}\right)\left(\frac{1+z_3}{2}\right)^3
\left(\frac{1+z_1z_2z_3}{2}\right)^3 -\\
&&  \frac{5}{4}z_3 \left(\frac{1+z_1}{2}\right)^3\left(\frac{1+z_2}{2}\right)^3\left(\frac{1+z_3}{2}\right)
\left(\frac{1+z_1z_2z_3}{2}\right)^3 -\\
&&  \frac{5}{4}z_1 z_2z_3\left(\frac{1+z_1}{2}\right)^3\left(\frac{1+z_2}{2}\right)^3\left(\frac{1+z_3}{2}\right)^3
\left(\frac{1+z_1z_2z_3}{2}\right).
\end{eqnarray*}
The associated refinable function is continuous as the corresponding subdivision scheme is uniformly convergent,
but $p$ does not satisfy the sub-QMF condition, as
$$
  1-\sum_{\sigma\in G}|p^\sigma(\be^{-i \omega})|^2<0 \quad \hbox{for} \quad
  \omega=\left( \frac{\pi}{6},0,0\right).
$$
\end{Example}

\subsection{Sums of squares} \label{subsec:sumsofsquares}

Next we give another equivalent formulation of the UEP in terms of a sums of
squares problem for the
Laurent polynomial
\begin{equation}\label{def:f}
 f:=1-\sum_{\sigma\in G}p^{\sigma*}p^{\sigma}.
\end{equation}
We say that $f \in C[T]$ is a {\it sum of hermitian squares},
if there exist $h_1,\ldots,h_r\in \C[T]$ such that $\displaystyle f=\sum_{j=1}^r h_j^*h_j$.
We start with  the following auxiliary lemma.

\begin{Lemma}\label{lem:subQMFiso}%
Let $p\in \C[T]$ with isotypical components $p_\chi$, $\chi\in G'$.
\begin{itemize}
\item[(a)]
$ \displaystyle \sum_{\sigma\in G}p^{\sigma*}p^\sigma \>=\>m\cdot\sum_{\chi\in
G'} p_\chi^*p_\chi$ is a $G$-invariant Laurent polynomial in $\R[T]$.
\item[(b)]
If $f$ in \eqref{def:f} is a sum of hermitian squares
\begin{equation}\label{id:tildeH}%
f\>=\>\sum_{j=1}^rh_j^*h_j,
\end{equation}
with $h_j\in \C[T]$, then
\begin{equation}\label{id:H}%
f\>=\>\sum_{j=1}^r\sum_{\chi \in
G'}\tilde h_{j,\chi}^*\tilde h_{j,\chi},
\end{equation}
with the $G$-invariant polyphase components $\tilde h _{j,\chi}\in \C[T]$.
\end{itemize}
\end{Lemma}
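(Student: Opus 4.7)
The plan is to reduce both parts to the isotypical decomposition $p=\sum_{\chi\in G'}p_\chi$, and exploit the defining eigenvalue property $p_\chi^\sigma=\bil\sigma\chi\,p_\chi$ together with the orthogonality relation $\sum_{\sigma\in G}\bil\sigma\mu=m\,\delta_{\mu,0}$ on the finite abelian group $G$. Both statements then follow from a direct computation, so no real obstacle is expected; the only points that require care are bookkeeping of characters under the involution $*$ and the cancellation of the polyphase shifts $z^{\alpha_\chi}$.

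For part (a), I would first note that
\[
 p^\sigma\>=\>\sum_{\chi\in G'}\bil\sigma\chi\,p_\chi,\qquad
 p^{\sigma*}\>=\>\sum_{\chi\in G'}\overline{\bil\sigma\chi}\,p_\chi^*,
\]
where the second identity uses that $*$ commutes with the $G$-action and that $\bil\sigma\chi$ is a root of unity. Multiplying these out gives $p^{\sigma*}p^\sigma=\sum_{\chi,\eta}\bil\sigma{\eta-\chi}\,p_\chi^*p_\eta$, and summing over $\sigma\in G$ collapses the double sum to the diagonal $\eta=\chi$ by the orthogonality relation, producing $m\sum_\chi p_\chi^*p_\chi$. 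This expression is $G$-invariant because each summand $p_\chi^*p_\chi$ has character $-\chi+\chi=0$, and it lies in $\R[T]$ because each such summand is $*$-invariant.

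For part (b), the key observation is that $f$ is $G$-invariant by (a), so $f$ equals its isotypical component of trivial character. Expanding $h_j=\sum_\chi h_{j,\chi}$ gives
\[
 h_j^*h_j\>=\>\sum_{\chi,\eta\in G'}h_{j,\chi}^*h_{j,\eta},
\]
and since $h_{j,\chi}^*h_{j,\eta}$ has character $\eta-\chi$, projecting onto the trivial character keeps only the diagonal terms:
\[
 f\>=\>\sum_{j=1}^r\sum_{\chi\in G'}h_{j,\chi}^*h_{j,\chi}.
\]
Finally, substituting $h_{j,\chi}=z^{\alpha_\chi}\tilde h_{j,\chi}$ and using $(z^{\alpha_\chi})^*z^{\alpha_\chi}=z^{-\alpha_\chi}z^{\alpha_\chi}=1$ replaces each factor $h_{j,\chi}^*h_{j,\chi}$ by $\tilde h_{j,\chi}^*\tilde h_{j,\chi}$, yielding the representation \eqref{id:H} with $G$-invariant summands $\tilde h_{j,\chi}$.
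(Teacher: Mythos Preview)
Your proof is correct and follows essentially the same approach as the paper: part~(a) is the same character/orthogonality computation the paper alludes to via Theorem~\ref{th:UEP_polyphase}, and for part~(b) your projection onto the trivial isotypical component is precisely the paper's averaging step $f=m^{-1}\sum_{\sigma\in G}\sum_j h_j^{\sigma*}h_j^\sigma$ followed by an application of~(a) to each $h_j$, after which both arguments cancel the polyphase shift $z^{\alpha_\chi}$ in the same way.
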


\begin{proof}
Similar computations as in the proof of Theorem \ref{th:UEP_polyphase} yield
the identity in (a). The $G$-invariance and
invariance by involution are obvious.
For (b) we observe that the left-hand side of \eqref{id:tildeH} is
$G$-invariant as well. Therefore, \eqref{id:tildeH} implies
$$
  1-\sum_{\sigma\in G}p^{\sigma*}p^{\sigma}= m^{-1}\sum_{j=1}^r \sum_{\sigma\in G}
  h_j^{\sigma*}h_j^{\sigma}.
$$
Using the result in (a) we get
$$
 m^{-1}\sum_{j=1}^r \sum_{\sigma\in G}h_j^{\sigma*}h_j^{\sigma}
 \>=\>\sum_{j=1}^r\sum_{\chi\in G'}h_{j,\chi}^*h_{j,\chi}.
$$
The polyphase component $\tilde h_{j,\chi}=z^{-\alpha_\chi}h_{j,\chi}$, with
$\alpha_\chi\in\Z^d$ and $\alpha_\chi\equiv \chi$ mod~$M\Z^d$, is $G$-invariant
 and satisfies $\tilde h_{j,\chi}^*\tilde h_{j,\chi}=
h_{j,\chi}^*h_{j,\chi}$.
\end{proof}

The results in \cite{LS06} imply that having a sum of hermitian squares decomposition of
\[
   f=1-\sum_{\sigma\in G}p^{\sigma*}p^{\sigma}
   =\sum_{j=1}^rh_j^*h_j\in \R[T],
\]
with $G$-invariant polynomials $h_j\in\C[T]$, is sufficient for the existence of
the polynomials $q_j$ in Theorem~\ref{th:UEP}. The authors in \cite{LS06} also
provide a method for the construction of $q_j$ from a sum of squares
decomposition of the trigonometric polynomial $f$. Lemma \ref{lem:subQMFiso}
shows that one does not need to require $G$-invariance of $h_j$ in
\eqref{def:f}. Moreover, it is not mentioned in \cite{LS06}, that the existence
of the sos decomposition of $f$ is
 also a necessary condition, and, therefore, provides another equivalent formulation
 of the UEP conditions \eqref{id:UEP}. We state the following extension of \cite[Theorem 3.4]{LS06}.

\begin{Theorem}\label{th:LaiSt}
For any $p \in \C[T]$, with $p(\bone)=1$, the following conditions are equivalent.
 \begin{itemize}
 \item[(i)]  There exist trigonometric polynomials $h_1,\ldots,h_r \in \C[T]$  satisfying
\eqref{def:f}.
\item[(ii)] There exist trigonometric polynomials $q_1,\ldots, q_N \in
\C[T]$  satisfying \eqref{id:UEP}.
\end{itemize}
\end{Theorem}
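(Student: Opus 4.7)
The plan is to prove the two implications separately. The forward direction (i) $\Rightarrow$ (ii) is essentially already in \cite[Theorem 3.4]{LS06}; the new content of the theorem is the converse (ii) $\Rightarrow$ (i).

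For (i) $\Rightarrow$ (ii), I would first apply Lemma \ref{lem:subQMFiso}(b): an arbitrary sos $f = \sum_j h_j^* h_j$ can be rewritten as $f = \sum_{j,\chi}\tilde h_{j,\chi}^*\tilde h_{j,\chi}$ with $G$-invariant summands. This is precisely the hypothesis of \cite[Theorem 3.4]{LS06}, which then constructs the required trigonometric polynomials $q_1,\ldots,q_N$ satisfying \eqref{id:UEP}.

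For the novel direction (ii) $\Rightarrow$ (i), the idea is to exploit the matrix formulation of UEP in Theorem \ref{th:matrixUEP}. Given $q_j$ satisfying \eqref{id:UEP}, assemble the polynomial matrix $U$ of size $(N+1)\times m$ whose row $0$ is $(p^\sigma)_{\sigma\in G}$ and whose row $k\ge1$ is $(q_k^\sigma)_{\sigma\in G}$, so that $U^*U=I_m$. The crucial observation is that $UU^*\in M_{N+1}(\C[T])$ is then a Hermitian idempotent, because $(UU^*)^2 = U(U^*U)U^* = UU^*$ and $(UU^*)^* = UU^*$. Consequently $W := I_{N+1}-UU^*$ is again Hermitian and idempotent in $M_{N+1}(\C[T])$. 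Taking the $(0,0)$ entry of the identity $W = W^2 = WW^*$ (the last equality using $W^*=W$) yields
$$
W_{00} = \sum_{k=0}^N W_{0k}\,W_{0k}^*.
$$
The left side equals $1-\sum_\sigma p^\sigma p^{\sigma*} = f$, and each term on the right is a hermitian square in $\C[T]$. Setting $h_k := W_{0k}^*$ therefore gives $f = \sum_{k=0}^N h_k^* h_k$; explicitly $h_0 = f$ and $h_k = -\sum_\sigma p^{\sigma*}q_k^\sigma$ for $k\ge1$.

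The main obstacle is not the short idempotent computation above, which is purely formal, but rather the sufficiency direction borrowed from \cite{LS06}, where constructing the wavelet masks $q_j$ from the given $h_j$ requires extending a row vector of polynomials to a full matrix satisfying \eqref{identity:UEPmatrixform}. In the new direction (ii) $\Rightarrow$ (i) the only check needed is that all entries of $W$ remain in $\C[T]$, which is automatic, so that the identity above really does produce a sum of hermitian squares decomposition of $f$.
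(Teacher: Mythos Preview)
Your proof is correct. The forward implication (i) $\Rightarrow$ (ii) is handled exactly as in the paper, via Lemma~\ref{lem:subQMFiso}(b) followed by the construction of \cite[Theorem~3.4]{LS06}. For the converse (ii) $\Rightarrow$ (i), however, the paper proceeds differently: it writes the $m\times m$ matrix identity
\[
I_m-\bigl(p^{\sigma*}\bigr)_{\sigma}\bigl(p^{\tau}\bigr)_{\tau}
=\sum_{j=1}^N\bigl(q_j^{\sigma*}\bigr)_{\sigma}\bigl(q_j^{\tau}\bigr)_{\tau},
\]
notes that the determinant of the left side equals $f$, and applies the Cauchy--Binet formula to the right side to express $\det$ of a sum of rank-one matrices as a sum of hermitian squares of $m\times m$ minors. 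Your idempotent argument on the $(N{+}1)\times(N{+}1)$ matrix $W=I_{N+1}-UU^*$ is more elementary (no determinants or Cauchy--Binet) and yields the explicit decomposition
\[
f=f^2+\sum_{k=1}^N\Bigl(\sum_{\sigma\in G}p^{\sigma*}q_k^{\sigma}\Bigr)^*\Bigl(\sum_{\sigma\in G}p^{\sigma*}q_k^{\sigma}\Bigr)
\]
with only $N{+}1$ summands, each of degree comparable to $\deg p+\deg q_k$. By contrast, Cauchy--Binet produces $\binom{N}{m}$ summands that are degree-$m$ polynomial minors in the $q_j^\sigma$; the paper's route makes the link to classical multilinear algebra transparent but at the cost of higher-degree and more numerous $h_j$.
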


\begin{proof}
Assume that $(i)$ is satisfied. Let $\chi_k$ be the elements of
$G'\simeq\{\chi_1,\ldots,\chi_m\}$. For $1\le j\le r$ and $1\le k\le m$, we define
the polyphase components $\tilde h_{j,\chi_k}$ of $h_j$ and set $\alpha_\chi\in\Z^d$,
$\alpha_\chi\equiv \chi$ mod~$M\Z^d$, as in Lemma
\ref{lem:subQMFiso}. The constructive method in the proof of \cite[Theorem
3.4]{LS06} yields the explicit form of $q_1,\ldots,q_N$, with $N=m(r+1)$,
satisfying \eqref{id:UEP}, namely
\begin{eqnarray}
   q_k&=&  m^{-1/2}z^{\alpha_{\chi_k}}
   (1-mpp_{\chi_k}^*), \qquad 1\le k\le m,\label{def:LSqk1}\\
   q_{mj+k}&=& p
   \tilde h_{j,\chi_k}^*
   , \qquad 1\le k\le m, \qquad 1 \le j\le r.\label{def:LSqk2}
\end{eqnarray}
Conversely, if $(ii)$ is satisfied, we obtain from \eqref{identity:UEPmatrixform}
\[
    I_m-
      \left[\begin{matrix} \vdots\\p^{\sigma*}\\ \vdots\end{matrix}
  \right]_{\sigma\in G}\cdot
  \left[\begin{matrix} \cdots & p^{\sigma}& \cdots\end{matrix}
  \right]_{\sigma\in G}= \sum_{j=1}^N
   \left[\begin{matrix} \vdots\\q_j^{\sigma*}\\ \vdots\end{matrix}
  \right]_{\sigma\in G}\cdot
  \left[\begin{matrix} \cdots & q_j^{\sigma}& \cdots\end{matrix}
  \right]_{\sigma\in G}.
\]
The determinant of the matrix on the left-hand side is equal to  $f$ in
\eqref{def:f}, and, by the Cauchy-Binet formula, the determinant of the matrix
on the right-hand side is a sum of squares.
\end{proof}

\begin{Remark}\label{rem:matrix-extension}
The constructive method in \cite{LS06} yields $N=m(r+1)$ trigonometric
polynomials $q_j$ in \eqref{id:UEP}, where $r$ is the number of trigonometric
polynomials $h_j$  in \eqref{def:f}. Moreover, the degree of some  $q_j$ in
\eqref{def:LSqk1} and \eqref{def:LSqk2} is at least
twice as high as the degree of $p$.
\end{Remark}


Next, we give an equivalent formulation of the UEP condition in terms of
hermitian sums of squares, derived from the identities \eqref{id:equiv_UEP_1}
   in Theorem
\ref{th:UEP_polyphase}. Our goal is to improve the constructive method
in \cite{LS06} and to give an algebraic equivalent formulation
that directly delivers the trigonometric polynomials $q_j$ in Theorem~\ref{th:UEP},
avoiding any extra computations as in \eqref{def:LSqk1} and \eqref{def:LSqk2}.
To this end we write $A\>=\>\C[T]$ and consider
$$A\otimes_\C A=\C[T\times T].$$
So $A$ is the ring of Laurent polynomials in $d$ variables $z_1,\dots, z_d$. We
may identify $A\otimes A$ with the ring of Laurent polynomials in $2d$ variables
$u_1,\dots,u_d$ and $v_1,\dots,v_d$, where $u_j=z_j\otimes1$ and $v_j=1\otimes
z_j$, $j=1,\dots,d$. On $A$ we have already introduced the $G'$-grading
$A=\bigoplus_{\chi\in G'} A_\chi$ and the involution $*$ satisfying
$z_j^*=z_j^{-1}$. On $A\otimes A$ we consider the involution $*$ defined by
$(p\otimes q) ^*=q^*\otimes p^*$ for $p,q\in A$. Thus $u_j^*=v_j^{-1}$ and $v_j^*=
u_j^{-1}$. An element $f\in A\otimes A$ will be called \emph{hermitian} if
$f=f^*$. We say that $f$ is a sum of hermitian squares if there are finitely
many $q_1,\dots,q_r\in A$ with $\displaystyle f=\sum _{k=1}^rq_k^*\otimes q_k$. On $A\otimes
A$ we consider the grading
$$A\otimes A\>=\>\bigoplus_{\chi,\eta\in G'}A_\chi\otimes A_\eta.$$
So $A_\chi\otimes A_\eta$ is spanned by the monomials $u^\alpha v^\beta$ with
$\alpha+M\Z^d=\chi$ and $\beta+M\Z^d=\eta$. Note that $(A_\chi\otimes
A_\eta)^*=A_{-\eta}\otimes A_{-\chi}$.

The multiplication homomorphism $\mu\colon A\otimes A\to A$ (with $\mu(p\otimes
q)=pq$) is compatible with the involutions. Let $I= \ker(\mu)$, the ideal in
$A\otimes A$ that is generated by $u_j-v_j$ with $j=1,\dots,d$. We also need to
consider the smaller ideal
$$J\>:=\>\bigoplus_{\chi,\eta\in G'}\Bigl(I\cap\bigl(A_\chi\otimes
A_\eta\bigr)\Bigr)$$ of $A\otimes A$. The ideal $J$ is $*$-invariant. Note that
the inclusion $J\subset I$ is proper since, for example, $u_j-v_j\notin J$.

\begin{Theorem}\label{th:UEP_hermitian}
Let $p\in A=\C[T]$ satisfy $p(\bone)=1$. The following conditions are
equivalent.
\begin{itemize}
\item[(i)] The Laurent polynomial
$$f=1-\sum_{\sigma\in G}p^{\sigma*}p^\sigma$$
 is a sum of hermitian squares in
$A$; that is, there exist $h_1,\dots,h_r\in A$ with $\displaystyle f= \sum_{j=1}^r h_j^*h_j$.
\item[(ii)]
For any hermitian elements $h_\chi=h_\chi^*$ in $A_{-\chi}\otimes A_{\chi}$,
with $\mu(h_\chi)=\frac1m$ for all $\chi\in G'$, the element
$$g\>:=\>\sum_{\chi\in G'}h_\chi-p^*\otimes p$$
is a sum of hermitian squares in $A\otimes A$ modulo~$J$; that is,
there exist $q_1,\dots,q_N\in A$ with $\displaystyle g- \sum_{j=1}^N q_j^*q_j\in J$.
\item[(iii)]
$p$ satisfies the UEP condition \eqref{id:UEP} for suitable $q_1, \dots,q_N\in
A$.
\end{itemize}
\end{Theorem}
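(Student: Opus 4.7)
The equivalence (i)$\Leftrightarrow$(iii) has already been proved in Theorem~\ref{th:LaiSt}, so the substance of the argument lies in linking condition (ii) with the UEP. The plan is to match the bigraded decomposition of $g-\sum_{j=1}^N q_j^*\otimes q_j$ with the identities \eqref{id:equiv_UEP_1} from Theorem~\ref{th:UEP_polyphase}, which are themselves equivalent to the UEP identities \eqref{id:UEP}.

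The key structural observation is that $J=\bigoplus_{\chi,\eta\in G'}\bigl(I\cap(A_\chi\otimes A_\eta)\bigr)$ is exactly the ``bigraded part'' of $I$; consequently an element of $A\otimes A$ lies in $J$ if and only if each of its bigraded components lies in $I$, i.e.\ is annihilated by the multiplication map $\mu$. With this in hand, I would decompose each ingredient of $g-\sum_j q_j^*\otimes q_j$ along the $G'\times G'$-grading: $h_\chi$ sits entirely in $A_{-\chi}\otimes A_\chi$ with $\mu(h_\chi)=\tfrac1m$; $p^*\otimes p=\sum_{\chi,\eta}p_\chi^*\otimes p_\eta$, whose $(-\chi,\eta)$-component is $p_\chi^*\otimes p_\eta$; and similarly $\sum_j q_j^*\otimes q_j=\sum_{\chi,\eta}\sum_j q_{j,\chi}^*\otimes q_{j,\eta}$.

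Applying $\mu$ to the $(-\chi,\chi)$-component of $g-\sum_j q_j^*\otimes q_j$ then yields $\tfrac1m-p_\chi^*p_\chi-\sum_j q_{j,\chi}^*q_{j,\chi}$, while applying $\mu$ to the $(-\chi,\eta)$-component for $\chi\neq\eta$ yields $-\bigl(p_\chi^*p_\eta+\sum_j q_{j,\chi}^*q_{j,\eta}\bigr)$. Requiring all these images to vanish is precisely the system \eqref{id:equiv_UEP_1}, which by Theorem~\ref{th:UEP_polyphase} is equivalent to \eqref{id:UEP}.

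With this dictionary in place the two implications reduce to quick checks. For (iii)$\Rightarrow$(ii), given UEP polynomials $q_j$, the identities \eqref{id:equiv_UEP_1} force every bigraded $\mu$-image of $g-\sum_j q_j^*\otimes q_j$ to vanish for \emph{any} admissible choice of the $h_\chi$, so the element lies in $J$. For (ii)$\Rightarrow$(iii), I would test the hypothesis on one convenient choice, e.g.\ $h_\chi=\tfrac1m\,z^{-\alpha_\chi}\otimes z^{\alpha_\chi}$ (manifestly hermitian, bigraded into $A_{-\chi}\otimes A_\chi$, with $\mu(h_\chi)=\tfrac1m$); the resulting $q_j$ then annihilate every bigraded $\mu$-image, hence satisfy \eqref{id:equiv_UEP_1}, hence UEP. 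The only step demanding genuine care is the bigraded characterization of $J$ and the passage from ``membership in $J$'' to ``vanishing of each bigraded $\mu$-image''; once that is justified, the rest is bookkeeping along the decomposition $A=\bigoplus_\chi A_\chi$.
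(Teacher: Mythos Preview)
Your proposal is correct and follows essentially the same route as the paper's proof: invoke Theorem~\ref{th:LaiSt} for (i)$\Leftrightarrow$(iii), then use the bigraded decomposition of $g-\sum_j q_j^*\otimes q_j$ together with the fact that membership in $J$ means each $(A_{-\chi}\otimes A_\eta)$-component lies in $I=\ker\mu$, which after applying $\mu$ reduces exactly to the polyphase identities \eqref{id:equiv_UEP_1} of Theorem~\ref{th:UEP_polyphase}. The paper is slightly terser (it does not spell out a concrete choice of $h_\chi$ for (ii)$\Rightarrow$(iii), deferring that to Remark~\ref{rem:UEP_hermitian}), but the argument is the same.
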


\begin{proof}
By Theorem \ref{th:LaiSt}, $(i)$ is equivalent to  $(iii)$.
In $(ii)$, let hermitian elements $h_\chi\in A_{-\chi,\chi}$ be given with
$\mu(h_\chi)=\frac1m$. Then $(ii)$ is equivalent to the existence of
$q_1,\dots,q_N\in A$ with
\begin{equation}\label{id:gmodJ}
   \sum_{\chi\in G'}h_\chi-p^*\otimes p-\sum_{j=1}^Nq_j^*\otimes q_j\in J.
\end{equation}
We write
$\displaystyle p=\sum_{\chi \in G'}p_\chi$ and $q_j$
as the sum of its isotypical components and observe that \eqref{id:gmodJ} is
equivalent to
\begin{equation}\label{id:gchieta}
   \delta_{\chi,\eta}h_\chi -p_\chi^*\otimes p_\eta-
   \sum_{j=1}^Nq_{j,\chi}^*\otimes q_{j,\eta}
   \in  I\quad\hbox{for all}\quad \chi,\eta\in G'.
\end{equation}
Due to $\mu(h_\chi)=\frac1m$,  the relation \eqref{id:gchieta} is
an equivalent reformulation of equations \eqref{id:equiv_UEP_1} in Theorem
\ref{th:UEP_polyphase}, and therefore equivalent to equations
\eqref{id:UEP}.
\end{proof}

\begin{Remark}\label{rem:UEP_hermitian}
\begin{itemize}
\item[$(i)$]
The proof of Theorem \ref{th:UEP_hermitian} does not
depend on the choice of the hermitian elements $h_\chi\in
A_{-\chi}\otimes A_{\chi}$ in $(ii)$.  Thus, it suffices to choose
particular hermitian elements satisfying  $\mu(h_\chi)=\frac1m$. For example,
if $p_\chi(\bone)=m^{-1}$ is satisfied for all $\chi\in G'$, we can choose
\begin{equation}\label{def:hchi}
   h_\chi = \sum_{\alpha\equiv \chi}{\rm Re}(p(\alpha))u^{-\alpha}v^\alpha,
\end{equation}
where $p(\alpha)$ are the coefficients of the Laurent polynomial $p$.
\item[$(ii)$]
The same Laurent polynomials $q_1,\ldots,q_N$ can be chosen
in Theorem~\ref{th:UEP_hermitian}
$(ii)$ and $(iii)$. This is the main advantage of working with the
condition $(ii)$ rather than with $(i)$.
\end{itemize}
\end{Remark}

  \subsection{Semi-definite programming}  \label{subsec:semi-definite}

We next devise a constructive method for determining the Laurent polynomials
$q_j$ in \eqref{id:UEP}. This method is based on $(ii)$ of Theorem \ref{th:UEP_hermitian}
and $(i)$ of Remark \ref{rem:UEP_hermitian}.

For a Laurent polynomial $ p=\sum_\alpha p(\alpha)z^\alpha$,
let $\cN\subset \Z^d$ contain $\{\alpha \in \Z^d \ : \ p(\alpha) \not=0\}$.
We also define the tautological (column) vector
$$
 \bx=\left[z^\alpha \ : \ \alpha \in \cN \right]^T,
$$
and the orthogonal projections $E_\chi \in \R^{|\cN| \times |\cN|}$  to be diagonal matrices with diagonal
entries given by
$$
 E_\chi(\alpha,\alpha)=\left\{ \begin{array}{cc} 1, & \alpha \equiv \chi \ \hbox{mod} \ M\Z^d, \\
 0, & \hbox{otherwise}, \end{array}\right. \alpha \in \cN.
$$

\begin{Theorem} \label{th:UEP_semidefinite}
Let
\begin{equation}\label{def:p_coeff}
    p=\bp\cdot \bx\in A=\C[T], \qquad \bp=[p(\alpha) \ : \ \alpha \in \cN]\in \C^{|\cN|},
\end{equation}
satisfy $p_\chi(\bone)=m^{-1}$ for all $\chi\in G'$. The following conditions
are equivalent.
\begin{itemize}
\item[(i)] There exist row vectors $\bq_j=[q_j(\alpha) \ : \ \alpha \in \cN]\in \C^{|\cN|}$,
 $1 \le j \le N$, satisfying the identities
\begin{eqnarray} \label{id:equiv_UEP_2}
  \bx^*E_\chi \left( {\rm diag}({\rm Re}\,\bp)-\bp^* \bp  -\sum_{j=1}^N \bq_j^* \bq_j  \right) E_\eta
  \bx=0\quad\hbox{for all}\quad
 \chi,\eta \in G'.
\end{eqnarray}
\item[(ii)]
$p$ satisfies the UEP condition \eqref{id:UEP} with
$$
    q_j=\bq_j\cdot \bx\in \C[T], \qquad j=1,\ldots,N,
$$
and suitable row vectors $\bq_j\in \C^{|\cN|}$.
\end{itemize}
\end{Theorem}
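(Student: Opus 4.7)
The plan is to recognize \eqref{id:equiv_UEP_2} as the matrix--level encoding of the polyphase identities \eqref{id:equiv_UEP_1}, thereby reducing everything to Theorem~\ref{th:UEP_polyphase}. The key observation driving the reduction is that the diagonal projector $E_\chi$ extracts precisely the coordinates indexed by $\alpha\equiv\chi$ mod $M\Z^d$, so sandwiching a rank-one matrix $\bp^*\bp$ between $E_\chi$ and $E_\eta$ picks out exactly one isotypical block of $p$.

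First I would expand each of the three bilinear forms in \eqref{id:equiv_UEP_2}. A direct computation from the definitions yields
\[
\bx^* E_\chi\,\bp^*\bp\,E_\eta\, \bx \;=\; \sum_{\alpha\equiv\chi,\,\beta\equiv\eta}\overline{p(\alpha)}\,p(\beta)\, z^{-\alpha}z^\beta \;=\; p_\chi^*\,p_\eta
\]
by \eqref{def:isotypical}, and the same calculation with $\bq_j$ in place of $\bp$ gives $\bx^* E_\chi\, \bq_j^*\bq_j\, E_\eta\,\bx = q_{j,\chi}^*\,q_{j,\eta}$. For the diagonal term, since ${\rm diag}({\rm Re}\,\bp)$ commutes with every $E_\chi$ and $E_\chi E_\eta = \delta_{\chi,\eta}E_\chi$, one obtains $\bx^* E_\chi\,{\rm diag}({\rm Re}\,\bp)\,E_\eta\,\bx = \delta_{\chi,\eta}\sum_{\alpha\equiv\chi}{\rm Re}(p(\alpha))$, which under the hypothesis $p_\chi(\bone)=m^{-1}$, together with the reality of this number, equals $\delta_{\chi,\eta}\,m^{-1}$.

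Assembling these pieces, \eqref{id:equiv_UEP_2} is equivalent to
\[
\delta_{\chi,\eta}\,m^{-1}\;-\;p_\chi^*\, p_\eta\;-\;\sum_{j=1}^N q_{j,\chi}^*\, q_{j,\eta}\;=\;0,\qquad \chi,\eta\in G',
\]
which is precisely the system \eqref{id:equiv_UEP_1}. The equivalence of (i) and (ii) then follows immediately from Theorem~\ref{th:UEP_polyphase}. Conceptually, this amounts to a coefficient--vector rewriting of Theorem~\ref{th:UEP_hermitian}(ii) specialized to the canonical hermitian element $h_\chi=\sum_{\alpha\equiv\chi}{\rm Re}(p(\alpha))\,u^{-\alpha}v^\alpha$ from Remark~\ref{rem:UEP_hermitian}(i); the passage from membership in the ideal $J$ to a scalar identity is realized by the collapsing map $u=v=z$ which sends a bihomogeneous form to $\bx^* A\,\bx$.

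The argument is essentially a bookkeeping exercise, so no genuine algebraic obstacle is anticipated. The only mild subtlety is that both (i) and (ii) must be interpreted as asserting the existence of solutions with supports inside the pre-chosen set $\cN$; this is automatic since the correspondence $q_j\leftrightarrow \bq_j$ is literal, and the same coefficient vectors realize both formulations.
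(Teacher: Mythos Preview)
Your argument is correct. The computation of the three bilinear forms is accurate, and the reduction to the polyphase identities \eqref{id:equiv_UEP_1} followed by an appeal to Theorem~\ref{th:UEP_polyphase} is valid; the one point needing care---that $\sum_{\alpha\equiv\chi}\mathrm{Re}\,p(\alpha)=m^{-1}$---you handle correctly by noting that $p_\chi(\bone)$ is real.

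The paper's own proof takes a slightly different route: it lifts the computation to $A\otimes A$ via the vector $\bv=[1\otimes z^\alpha]^T$, identifies the resulting bilinear forms with the tensor expressions $p_\chi^*\otimes p_\eta$ and the hermitian elements $h_\chi$ of Remark~\ref{rem:UEP_hermitian}(i), and then invokes the equivalence (ii)$\Leftrightarrow$(iii) of Theorem~\ref{th:UEP_hermitian} together with the multiplication map $\mu(\bv)=\bx$. Your approach collapses this lift from the outset by working directly with $\bx$ in $A$, which is more elementary and avoids the ideal~$J$ altogether; the paper's route, on the other hand, makes explicit the conceptual link to Theorem~\ref{th:UEP_hermitian} that you only mention in passing. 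The underlying computation is the same in both cases, so the difference is one of packaging rather than substance.
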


\begin{proof}
Define
$$
 \bv=\left[1\otimes z^\alpha \ : \ \alpha \in \cN \right]^T \in (A\otimes A)^{|\cN|}.
$$
Note that $\bp\bv=1\otimes p$ and the definition of $E_\chi$ gives
$\bp E_\chi\bv=1\otimes p_\chi$. Therefore, we have
$$
 \bv^* E_\chi \bp^* \bp E_\eta \bv=p_\chi^*\otimes p_\eta
 \quad\hbox{for all}\quad \chi,\eta\in G',
$$
and the analogue for $q_{j,\chi}^*\otimes q_{j,\eta}$. Moreover, we have
$$
 \bv^* E_\chi \hbox{diag}({\rm Re}\,\bp) E_\eta \bv=\delta_{\chi,\eta}
 \sum_{\alpha\equiv \chi}{\rm Re}(p(\alpha))u^{-\alpha}v^\alpha.
$$
Due to
$p_\chi(\bone)=m^{-1}$ and by Remark \ref{rem:UEP_hermitian} we choose
$h_\chi=\bv^* E_\chi \hbox{diag}({\rm Re}\, \bp) E_\chi \bv$
as the hermitian elements in
Theorem \ref{th:UEP_hermitian}$(ii)$, and
 the relation \eqref{id:gchieta} is equivalent to
$$
  \bv^*E_\chi \left( \hbox{diag}({\rm Re}\,\bp)-\bp^* \bp
  -\sum_{j=1}^N \bq_j^* \bq_j  \right) E_\eta   \bv \in I
  \quad\hbox{for all}\quad \chi,\eta\in G'.
$$
Due to $\mu(\bv)=\bx$, the claim follows from the equivalence of $(ii)$ and $(iii)$ in Theorem \ref{th:UEP_hermitian}.
\end{proof}

We suggest the following constructive method based on Theorem \ref{th:UEP_semidefinite}.
Given the trigonometric polynomial $p$ and the vector $\bp$ in \eqref{def:p_coeff},
define the matrix
\begin{equation} \label{def:R}
 R=\hbox{diag}({\rm Re}\,\bp)-\bp^*\bp\in \C^{|\cN| \times |\cN|}.
\end{equation}
Then the task of constructing tight wavelet frames can be formulated as the
following problem of {\bf semi-definite programming}:  find a
matrix $O\in \C^{|\cN| \times |\cN|}$ such that
\begin{equation}\label{id:Sposdef}
   S:=R+O\quad\hbox{is positive semi-definite}
\end{equation}
subject to the constraints
\begin{equation}\label{id:null-matrices}
  \bx^* E_\chi \, O  \, E_\eta  \bx=0  \quad \hbox{for all}\quad
  \chi, \eta \in G'.
\end{equation}
If such a matrix $O$ exists,
we determine the trigonometric polynomials $q_j=\bq_j \bx\in \C[T]$
by choosing any decomposition of the form
$$
    S=\sum_{j=1}^N\bq_j^* \bq_j
$$
with standard methods
from linear algebra.

\medskip

If the semi-definite programming problem does not have a
solution, we can increase the set $\cN$ and start all over.
Note that the identities \eqref{id:null-matrices} are equivalent to the
following linear constraints on the null-matrices $O$
$$
 \sum_{\alpha \equiv \chi, \beta \equiv \eta}
 O_{\alpha,\beta} z^{\beta-\alpha}=0 \quad\hbox{for all}\quad \chi, \eta
 \in G',
$$
or, equivalently,
$$
 \sum_{\alpha \equiv \chi} O_{\alpha,
 \alpha+\tau}=0 \quad \hbox{for all}\quad
 \tau \in \{\beta-\alpha \ : \ \alpha, \beta \in
 \cN\}.
$$

\begin{Example} To illustrate the concept of null-matrices, we consider first a very
prominent one-dimensional example of a Daubechies wavelet. Let
$$
 p=\bp \cdot \bx, \quad
 \bp=\frac{1}{8} \left[\begin{array}{cccc} 1+\sqrt{3} & 3+\sqrt{3} & 3-\sqrt{3}& 1-\sqrt{3}
 \end{array}\right],
$$
and $\bx=\left[1,z,z^2,z^3\right]^T$. In this case $M=m=2$, $G\simeq\{0,\pi\}$,
$G'\simeq\{0,1\}$ and the orthogonal projections $E_\chi \in \R^{4 \times
4}$, $\chi \in G'$, are given by
$$
 E_0=\hbox{diag}[1,0,1,0] \quad \hbox{and} \quad E_1=\hbox{diag}[0,1,0,1].
$$
By \eqref{def:R}, we have
$$
 R=\frac{1}{64} \left[\begin{array}{rrrr}4+6\sqrt{3}& -6-4\sqrt{3}& -2\sqrt{3}&2\\
 -6-4\sqrt{3}& 12+2\sqrt{3} & -6& 2\sqrt{3} \\ -2\sqrt{3} & -6 & 12-2\sqrt{3} & -6+4\sqrt{3} \\
 2& 2\sqrt{3} & -6+4\sqrt{3} & 4-6\sqrt{3} \end{array} \right],
$$
which is not positive semi-definite. Define
$$
  O=\frac{1}{64} \left[\begin{array}{rrrr}-8\sqrt{3}& 8\sqrt{3}& 0&0\\
  8\sqrt{3}& -8\sqrt{3} & 0& 0 \\ 0 & 0 & 8\sqrt{3} & -8\sqrt{3} \\
 0& 0 & -8\sqrt{3} & 8\sqrt{3} \end{array} \right]
$$
satisfying \eqref{id:null-matrices}. Then $S=R+O$ is positive semi-definite, of
rank one, and yields the well-known Daubechies wavelet, see \cite{Daub} defined
by
$$
 q_1= \frac{1}{8} \left[ \begin{array}{cccc}
 1-\sqrt{3} & -3+\sqrt{3} & 3+\sqrt{3}& -1-\sqrt{3}
 \end{array}\right] \cdot \bx.
$$
\end{Example}

Another two-dimensional example of one possible choice of an appropriate
null-matrix satisfying \eqref{id:null-matrices} is given in Example
\ref{ex:butterfly}.

\begin{Remark}
Another, very similar, way of working with null-matrices was pursued already in
\cite{CS08}.
\end{Remark}

\section{Existence and constructions of tight wavelet frames} \label{sec:algebra}

In this section we use results from algebraic geometry and Theorem \ref{th:LaiSt}
to resolve the problem of existence of tight wavelet frames. Theorem  \ref{th:LaiSt}
allows us to reduce the problem of existence of $q_j$ in \eqref{id:UEP} to the problem
of existence of an sos decomposition of a single nonnegative polynomial
\begin{equation}
 f=1-\sum_{\sigma \in G} p^{\sigma*} p^{\sigma} \in \R[T]. \notag
\end{equation}
In subsection \ref{subsec:existence}, for dimension $d=2$, we show that the
polynomials $q_1,\dots,q_N  \in \C[T]$ as in Theorem \ref{th:UEP} always exist.
This result is based on recent progress in real algebraic geometry.
We also include an example of a three-dimensional trigonometric polynomial $p$, satisfying the
sub-QMF condition \eqref{id:subQMF}, but for which
trigonometric polynomials $q_1,\ldots,q_N$ as in Theorem \ref{th:UEP} do not exist.
In subsection \ref{subsec:sufficient}, we give sufficient conditions for the
existence of the $q_j$'s in the multidimensional case and give several explicit
constructions of tight wavelet frames in section \ref{subsec:construction}.


\subsection{Existence of tight wavelet frames} \label{subsec:existence}


In this section we show that in the two-dimensional case ($d=2$) the question of
existence of a wavelet tight frame can be positively answered using the results
from \cite{sch:surf}. Thus, Theorem \ref{th:existence2dim}  answers a long
standing open question about the existence of tight wavelet frames as in Theorem \ref{th:UEP}.
The result of Theorem \ref{th:noUEP} states that in the dimension $d \ge 3$
for a given trigonometric polynomial $p$ satisfying $p(\bone)=1$ and the sub-QMF condition \eqref{id:subQMF}
one cannot always determine trigonometric polynomials $q_j$ as in
Theorem \ref{th:UEP}.

\begin{Theorem}\label{th:existence2dim}%
Let $d=2$, $p\in\C[T]$ satisfy $p(\bone)=1$ and
$\displaystyle \sum_{\sigma\in G}p^{\sigma *} p^\sigma \le1$ on $\T^2=T(\R)$. Then there exist $N\in\N$
and trigonometric polynomials $q_1,\dots,q_N\in\C[T]$ satisfying
\begin{equation}\label{aux:th:existence2dim}%
 \delta_{\sigma,\tau}\>=\>p^{\sigma*}p^{\tau}+\sum_{j=1}^N
 q_j^{\sigma*}q_j^{\tau},\qquad \sigma,\,\tau\in G.
\end{equation}
\end{Theorem}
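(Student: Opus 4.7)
The plan is to pass, via Theorem~\ref{th:LaiSt}, from the existence of the frame generators $q_j$ to the existence of a sum of hermitian squares decomposition of the single Laurent polynomial $f=1-\sum_{\sigma\in G}p^{\sigma*}p^\sigma$, and then to produce such a decomposition by invoking Scheiderer's surface theorem from \cite{sch:surf}.

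First, by Theorem~\ref{th:LaiSt} it suffices to exhibit $h_1,\dots,h_r\in\C[T]$ with $\displaystyle f=\sum_{j=1}^r h_j^*h_j$. By Lemma~\ref{lem:subQMFiso}(a) the Laurent polynomial $\sum_{\sigma\in G}p^{\sigma*}p^\sigma$ is $*$-invariant, hence so is $f\in\R[T]$; the sub-QMF hypothesis $\sum_{\sigma\in G}p^{\sigma*}p^\sigma\le 1$ on $\T^2$ is exactly the statement that $f\ge 0$ on $T(\R)=\T^2$. Thus the task reduces to proving that every nonnegative element of $\R[T]$ is a sum of hermitian squares in $\C[T]$ in the case $d=2$.

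Next I would appeal to Scheiderer's theorem for compact real algebraic surfaces: on a nonsingular affine real algebraic variety of dimension at most two whose real points form a compact set, every regular function that is nonnegative on the real points is a (denominator-free) sum of squares of regular functions. For $d=2$ the anisotropic torus $T\subset\R^4$ is cut out by the two equations $x_j^2+y_j^2-1=0$, $j=1,2$, is nonsingular (the Jacobian $\hbox{diag}(2x_1,2y_1,2x_2,2y_2)$ restricted to the two defining gradients has full rank at every real point), of real dimension two, and has compact real locus $\T^2$. Hence the theorem applies and yields $g_1,\dots,g_r\in\R[T]$ with $\displaystyle f=\sum_{j=1}^r g_j^2$. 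Since each $g_j$ is $*$-invariant in $\C[T]$, we have $g_j^2=g_j^*g_j$, so $f$ is a sum of hermitian squares in $\C[T]$, and Theorem~\ref{th:LaiSt} produces $q_1,\dots,q_N\in\C[T]$ satisfying \eqref{aux:th:existence2dim}.

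The single substantive step — and the reason the two-dimensional case works while the analogous statement fails for $d\ge 3$ (as already foreshadowed in the introduction via the counterexamples cited from \cite{S99}) — is the appeal to Scheiderer's nonnegative-equals-sum-of-squares theorem, which is genuinely a surface phenomenon and is the main obstacle to overcome. All remaining ingredients, namely the reduction to $f$, the observation that $f$ is real and nonnegative, and the passage from real squares to hermitian squares, are formal consequences of the algebraic set-up already developed in Section~\ref{sec:UEP}.
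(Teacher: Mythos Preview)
Your proof is correct and follows essentially the same route as the paper's: reduce via Theorem~\ref{th:LaiSt} to showing that $f\in\R[T]$ is a sum of (hermitian) squares, then invoke Scheiderer's surface theorem from \cite{sch:surf} using that $T$ is a nonsingular affine surface with compact real locus. The paper additionally remarks (via Lemma~\ref{lem:subQMFiso}(b)) that the $h_j$ may be taken $G$-invariant, but this is not needed for the appeal to Theorem~\ref{th:LaiSt} as you have it.
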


\begin{proof}
The torus $T$ is a non-singular affine algebraic surface over $\R$, and $T(\R)$
is compact. The polynomial $f$ in \eqref{def:f} is in $\R[T]$  and is
nonnegative on $T(\R)$ by assumption. By Corollary~3.4 of \cite{sch:surf}, there
exist $h_1,\dots,h_r\in\C[T]$ satisfying $\displaystyle f=\sum_{j=1}^r h_j^* h_j$. According to
Lemma \ref{lem:subQMFiso} part $(b)$, the polynomials $h_j$ can be taken to be
$G$-invariant. Thus, by Theorem \ref{th:LaiSt}, there exist
polynomials $q_1,\dots,q_N$ satisfying \eqref{aux:th:existence2dim}.
\end{proof}

The question may arise, if there exists
a trigonometric polynomial $p$ that satisfies $p(\bone)=1$ and  the sub-QMF condition
$\displaystyle \sum_{\sigma\in G} p^{\sigma*}p^\sigma \le 1$ on $\T^d$, but for which there exists no UEP tight frame
as in Theorem \ref{th:UEP}.
Or, due to Corollary \ref{cor:UEP_matrix}, if we can find such a $p$, for which the nonnegative
trigonometric polynomial $1-p^{*}p$ is not a sum of hermitian squares of trigonometric polynomials?

\begin{Theorem}\label{th:noUEP}
There exists $p\in\C[T]$ satisfying $p(\bone)=1$ and the sub-QMF condition on $\T^3$,
such that $1-p^*p$ is not a sum of hermitian squares in $\R[T]$.
\end{Theorem}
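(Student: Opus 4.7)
The plan is to exhibit an explicit trigonometric polynomial $p \in \C[T]$ in $d = 3$ variables, satisfying $p(\bone)=1$ and the sub-QMF inequality on $\T^3$, while $1 - p^* p$ --- though nonnegative on $\T^3$ as an immediate consequence of sub-QMF --- fails to be a sum of hermitian squares in $\R[T]$. The fundamental input is Scheiderer's theorem in \cite{S99}: on the three-dimensional real algebraic torus $T$, the convex cone of nonnegative elements of $\R[T]$ strictly contains the cone of sums of hermitian squares. In other words, bad nonnegative polynomials do exist on $\T^3$, and the construction below transports such an obstruction into the special subfamily $\{1 - p^* p \colon p \in \C[T],\ p(\bone)=1\}$.

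First I would fix an ansatz for $p$ drawn from a restricted family --- for instance, a refinement mask of MRA type associated with a three-dimensional subdivision scheme, or a trigonometric polynomial with a prescribed Newton polytope and some additional symmetry assumption (the "rather strong assumption on the underlying refinable function" flagged in the introduction) --- so that the normalization $p(\bone)=1$ and the sub-QMF condition $\sum_{\sigma\in G}p^{\sigma *}p^\sigma \le 1$ can both be arranged. By Lemma \ref{lem:subQMFiso}(a), the sub-QMF inequality equals the pointwise bound $m\sum_{\chi\in G'}p_\chi^* p_\chi \le 1$ on $\T^3$, which translates into a finite system of polynomial inequalities in the coefficients of $p$ cutting out a non-empty semi-algebraic subset of parameter space; a final scalar rescaling pins down $p(\bone)=1$.

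The heart of the proof is then to show that within this feasible set there is some $p$ for which $1 - p^* p$ is not a sum of hermitian squares. For this I would use the Gram-matrix and semi-definite programming characterization from Subsection \ref{subsec:semi-definite}: a nonnegative $f \in \R[T]$ with prescribed Newton-polytope support is a sum of hermitian squares if and only if a concrete semi-definite feasibility problem admits a solution. Infeasibility can be certified either by exhibiting an explicit separating linear functional of moment type (the dual route used in Scheiderer's non-sos constructions), or by reducing to a local obstruction along a subvariety of $\T^3$ on which $1 - p^* p$ vanishes to high order, and showing that the induced quadratic form on the jet space cannot be made positive semi-definite. The main obstacle is precisely this last step: Scheiderer's result supplies bad polynomials only in the abstract, whereas polynomials of the very restricted form $1 - p^* p$ occupy a thin slice of the nonnegative cone, so one cannot simply quote \cite{S99}. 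Overcoming this requires either the explicit display of a concrete $p$ together with a verifiable infeasibility certificate for the associated semi-definite program, or a dimension-count / genericity argument showing that sos representability within the family $\{1 - p^* p\}$ is a proper algebraic condition on the coefficients of $p$ and hence fails for a generic (and in particular some explicit) choice.
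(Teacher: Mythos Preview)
Your proposal is a reasonable strategic outline, but it is not a proof: you correctly identify the central difficulty --- that polynomials of the special shape $1-p^*p$ form a thin slice of the nonnegative cone, so Scheiderer's abstract existence result \cite{S99} cannot be quoted directly --- and you list two possible routes (an SDP infeasibility certificate, or a local obstruction at a high-order zero), but you do not carry out either one. In particular, the ``dimension-count / genericity'' suggestion at the end is unlikely to succeed as stated, since nothing guarantees that the non-sos locus meets the variety of polynomials of the form $1-p^*p$ with $p(\bone)=1$ and sub-QMF satisfied; that intersection is precisely what has to be exhibited.

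The paper's proof is fully explicit and executes exactly the ``local obstruction'' route you mention, via a concrete construction you do not identify. One writes $p=(1-c\,m)\,a$, where $m$ is the Motzkin polynomial expressed in the local coordinates $y_j=\sin\omega_j$ (the classical homogeneous sextic that is nonnegative but not a sum of squares in $\R[y_1,y_2,y_3]$), $0<c\le\frac13$, and $a$ is the symbol of a $3$-D orthonormal wavelet with at least $8$ vanishing moments (for instance the tensor product of the Daubechies $m_8$ symbol). The high-order zero conditions on $a$ force the Taylor expansion of $1-p^*p$ at $\bone$ to have lowest-degree homogeneous part equal to $2c\,m$; the elementary local--global principle that a global hermitian sos decomposition induces an sos decomposition of the leading Taylor form then immediately rules out any sos representation of $1-p^*p$. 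The sub-QMF bound and $p(\bone)=1$ follow from $0\le 1-c\,m\le 1$ on $\T^3$, the $G$-invariance of $m$, and the orthonormality $\sum_\sigma a^{\sigma*}a^\sigma=1$. What your proposal is missing, then, is precisely this Motzkin-times-orthonormal-symbol ansatz together with the observation that the non-sos property is detected by the leading Taylor form at $\bone$.
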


The proof is constructive. The following example defines a family of
trigonometric polynomials with the properties stated in Theorem \ref{th:noUEP}.
We make use of the following local-global result from algebraic geometry: if
the Taylor expansion of  $f\in \R[T]$
at one of its roots  has, in local coordinates,
a homogeneous part of lowest degree which is not sos of real algebraic
polynomials, then $f$ is not sos in $\R[T]$.

\begin{Example}
Denote $z_j=e^{-i\omega_j}$, $j=1,2,3$.
We let
\[
   p(z)=\Big(1-c \cdot m(z) \Big) a(z),\quad z \in T, \quad 0<c\le \frac{1}{3},
\]
where
\[
   m(z)= y_1^4y_2^2+y_1^2y_2^4+y_3^6-3y_1^2y_2^2y_3^2 \in \R[T],\qquad
   y_j=\sin\omega_j.
\]
In the local coordinates $(y_1,y_2,y_3)$ at $z=\bone$,
$m$ is the well-known Motzkin polynomial in $\R[y_1,y_2,y_3]$; i.e.
 $m$ is not sos in $\R[y_1,y_2,y_3]$. Moreover, $a\in \R[T]$ is chosen
 such that
\begin{equation}\label{eq:propA}
   D^\alpha a(\bone)=\delta_{0,\alpha},\quad
   D^\alpha a(\sigma)=0,\quad 0\le |\alpha|< 8, \quad
   \sigma\in G\setminus \{\bone\},
\end{equation}
and $\displaystyle \sum_{\sigma \in G} a^{\sigma*}a^\sigma \le 1$. Such $a$ can be, for example, any scaling
symbol of a 3-D orthonormal wavelet with 8 vanishing moments; in particular, the
tensor product Daubechies symbol $a(z)=m_8(z_1) m_8(z_2) m_8(z_3)$ with $m_8$ in
\cite{Daub} satisfies  conditions \eqref{eq:propA} and $\displaystyle \sum_{\sigma \in G} a^{\sigma *}a^\sigma
= 1$. The properties of $m$ and $a$ imply that
\begin{itemize}
\item[1.] $p$ satisfies the sub-QMF condition on $\T^3$, since $m$ is $G$-invariant and
$0\le 1-c \cdot m \le 1$ on $\T^3$,
\item[2.] $p$ satisfies sum rules of order at least $6$,
\item[3.] the Taylor expansion of
$1-p^*p$ at $z=\bone$, in local coordinates $(y_1,y_2,y_3)$,
has  $2\cdot c \cdot m$ as its homogeneous part of lowest degree.
\end{itemize}
Therefore, $1-p^*p$ is not sos of trigonometric polynomials in $\R[T]$. By Corollary \ref{cor:UEP_matrix}, the corresponding
nonnegative trigonometric polynomial  $f$ in \eqref{def:f} has no sos decomposition.
\end{Example}

\subsection{Sufficient conditions for existence of tight wavelet frames} \label{subsec:sufficient}

In the general multivariate case $d \ge 2$, in Theorem \ref{th:existence-ddim},
we provide a sufficient condition for the existence of a sums of squares
decomposition of $f$ in \eqref{def:f}. This condition is based on the properties
of the Hessian of $f \in \R[T]$
$$
  \Hess(f)=\left( D^\mu f  \right)_{\mu \in \N_0^s, |\mu|=2},
$$
where $f$ is a trigonometric polynomial in $\omega \in \R^d$ and $D^\mu$ denotes the $|\mu|-$th partial
derivative with respect to $\omega \in \R^d$.

\begin{Theorem}\label{hessiancrit}%
Let $V$ be a non-singular affine $\R$-variety for which $V(\R)$ is
compact, and let $f\in\R[V]$ with $f\ge0$ on $V(\R)$. For every $\xi
\in V(\R)$ with $f(\xi)=0$, assume that the Hessian of $f$ at $\xi$
is strictly positive definite. Then $f$ is a sum of squares in
$\R[V]$.
\end{Theorem}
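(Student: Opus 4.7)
The plan is to reduce the global sum of squares question for $f$ to local questions at the zeros of $f$ on $V(\R)$, and then to invoke a local--global principle of Scheiderer for sums of squares on compact nonsingular real affine varieties.

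First I would observe that the zero set $Z:=\{\xi\in V(\R):f(\xi)=0\}$ is finite. Positive definiteness of $\Hess(f)(\xi)$, combined with $f\ge 0$ and $f(\xi)=0$, forces $\xi$ to be a strict local minimum and hence an isolated zero of $f|_{V(\R)}$; compactness of $V(\R)$ then bounds the number of such isolated zeros.

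Next I would show that $f$ is a sum of squares in every completed local ring $\widehat O_{V,\xi}$ with $\xi\in V(\R)$. Away from $Z$ this is immediate: $f$ has positive constant term in $\widehat O_{V,\xi}$, hence is a unit, and a formal square root extracted via the binomial series gives $f=g^2$ for some $g\in\widehat O_{V,\xi}$. For $\xi\in Z$, nonsingularity of $V$ at $\xi$ identifies $\widehat O_{V,\xi}$ with $\R[[x_1,\dots,x_n]]$, $n=\dim_\xi V$, and the Taylor expansion of $f$ in these coordinates begins with
\[
 Q(x)\>=\>\tfrac12\,x^T\,\Hess(f)(\xi)\,x,
\]
which is positive definite by hypothesis. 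A formal Morse lemma (a Newton-type iteration on formal power series that successively absorbs cubic and higher corrections into a change of coordinates) produces formal coordinates in which $f=Q(\cdot)$; an orthogonal diagonalization of $Q$ over $\R$ followed by rescaling then writes $f$ as $y_1^2+\cdots+y_n^2$, a sum of $n$ squares in $\widehat O_{V,\xi}$.

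Finally I would invoke Scheiderer's local--global principle: on a compact nonsingular real affine variety $V$, any $g\in\R[V]$ that is a sum of squares in $\widehat O_{V,\xi}$ for every $\xi\in V(\R)$ is already a sum of squares in $\R[V]$ itself. Applied to $f$, the local certificates constructed above glue to the desired global sos representation. The main obstacle is precisely this appeal to the local--global principle, which is the substantive real-algebraic ingredient; the formal Morse reduction is otherwise essentially routine, but it must be executed in the formal category so as to produce an identity in the completion $\widehat O_{V,\xi}$ rather than one that holds only in a convergent analytic neighborhood.
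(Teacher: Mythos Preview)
Your proposal is correct and follows essentially the same route as the paper: observe that the positive definite Hessian forces the real zero set to be finite, and then invoke Scheiderer's local--global machinery from \cite{sch:mz}. The paper simply cites \cite[Corollary~2.17, Example~3.18]{sch:mz} directly for the conclusion, whereas you have unpacked the content of those references by supplying the formal Morse lemma argument for the local sos representation at each zero and then appealing to the local--global principle.
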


\begin{proof}
The hypotheses imply that $f$ has only finitely many zeros in
$V(\R)$. Therefore the claim follows from \cite[Corollary
2.17, Example 3.18]{sch:mz}.
\end{proof}

Theorem \ref{hessiancrit} implies the following result.

\begin{Theorem}\label{th:existence-ddim}%
Let $p\in\C[T]$ satisfy $p(\bone)=1$ and $\displaystyle f=1-\sum_{\sigma\in G}p^{\sigma*}p^\sigma \ge 0$ on $T(\R)=\T^d$.  If the
Hessian of $f$ is positive
definite at every zero of $f$ in $\T^d$, then there exist $N\in\N$ and polynomials
$q_1,\dots,q_N\in\C[T]$ satisfying \eqref{id:UEP}.
\end{Theorem}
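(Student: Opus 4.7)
The plan is to apply Theorem \ref{hessiancrit} to the variety $V = T$ and the function $f = 1 - \sum_{\sigma \in G} p^{\sigma*}p^\sigma$, and then invoke Theorem \ref{th:LaiSt} to convert the resulting sum-of-squares decomposition into the required UEP polynomials $q_1, \dots, q_N$.

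First I would verify that the hypotheses of Theorem \ref{hessiancrit} are met. The torus $T$ is defined by the equations $x_j^2 + y_j^2 = 1$ for $j = 1, \dots, d$, whose Jacobian has maximal rank at every point of $T(\R) = \T^d$, so $T$ is a non-singular affine $\R$-variety; compactness of $T(\R) = \T^d$ is obvious. Next, $f$ lies in $\R[T]$: each summand $p^{\sigma*}p^\sigma$ is $*$-invariant (hermitian), so the sum is in $\R[T]$, and by Lemma \ref{lem:subQMFiso}(a) it is even $G$-invariant. The assumption $f \ge 0$ on $\T^d$ and the positive-definiteness of $\Hess(f)$ at every zero of $f$ in $\T^d$ are exactly the hypotheses required by Theorem \ref{hessiancrit}.

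Applying Theorem \ref{hessiancrit} then gives a representation $f = \sum_{j=1}^r g_j^2$ with $g_j \in \R[T]$. Since $\R[T]$ is precisely the $*$-invariant subring of $\C[T]$, each $g_j$ satisfies $g_j^* = g_j$, so
\[
 f \>=\> \sum_{j=1}^r g_j^* g_j
\]
is a sum of hermitian squares in $\C[T]$. This is condition $(i)$ of Theorem \ref{th:LaiSt}, which directly yields trigonometric polynomials $q_1, \dots, q_N \in \C[T]$ satisfying the UEP identities \eqref{id:UEP}, completing the proof.

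The only step that is not essentially bookkeeping is the invocation of Theorem \ref{hessiancrit}, whose proof rests on the local-global result from \cite{sch:mz}; in the present note that work has already been done, so the actual main point is simply recognizing that the Hessian hypothesis is tailored precisely so as to make that theorem applicable. The one place that deserves a brief check is the passage from a real sum of squares to a sum of hermitian squares, which however is immediate once one recalls that $\R[T] \subset \C[T]$ is the $*$-fixed subring.
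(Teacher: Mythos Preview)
Your proof is correct and follows essentially the same route as the paper: apply Theorem \ref{hessiancrit} to obtain an sos decomposition of $f$ in $\R[T]$, then invoke Theorem \ref{th:LaiSt} to produce the $q_j$. Your version is in fact more careful than the paper's very terse argument---you explicitly verify the hypotheses of Theorem \ref{hessiancrit} and spell out why a real sos is automatically a hermitian sos---and you correctly cite Theorem \ref{th:LaiSt} where the paper's proof (apparently by a slip) points to Theorem \ref{th:existence2dim}.
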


\begin{proof}
By Theorem \ref{hessiancrit}, $f$ is a sum of squares in $\R[T]$. The claim
follows then by Theorem \ref{th:existence2dim}.
\end{proof}

Due to $p(\bone)=1$, $z=\bone$ is obviously a zero of $f$. We show next how to
express the Hessian of $f$ at $\bone$ in terms of the gradient $\nabla p(\bone)$
and the Hessian of $p$ at $\bone$, if $p$ additionally satisfies the so-called
sum rules of order $2$, or, equivalently, satisfies the zero conditions of order
$2$. We say that $p \in \C[T]$ satisfies zero conditions of order $k$, if
$$
  D^\mu p(\be^{-i\sigma})=0, \quad \mu \in \N_0^d, \quad |\mu|<k, \quad \sigma \in G\setminus\{0\},
$$
see \cite{JePlo,JiaJiang} for details. The assumption that $p$ satisfies sum
rules of order $2$ together with $p(\bone)=1$ are necessary for the continuity
of the corresponding refinable function $\phi$.

\begin{Lemma}\label{lem:Hessianf_HessianP}%
Let $p\in\C[T]$ with real coefficients satisfy the sum rules of order $2$ and
$p(\bone)=1$. Then the Hessian of $\displaystyle f=1-\sum_{\sigma\in G}p^{\sigma*}p^\sigma$ at
$\bone$ is equal to
$$-2\,\Hess(p)(\bone)-2\,\nabla p(\bone)^*\nabla p(\bone).$$
\end{Lemma}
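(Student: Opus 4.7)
The plan is to work with the trigonometric-polynomial realization $P(\omega):=p(\be^{-i\omega})$ and reduce the Hessian computation to an exercise in multivariable calculus. Since $\sigma\in G\subset\T^d$ acts by translation of the argument, we have
$p^{\sigma}(\be^{-i\omega})=P(\omega+\sigma)$, and, because real $\omega$ gives $z_j^{-1}=\overline{z_j}$, the involution becomes complex conjugation on the torus: $p^{\sigma*}(\be^{-i\omega})=\overline{P(\omega+\sigma)}$. Therefore
\[
f(\omega)\>=\>1-\sum_{\sigma\in G}|P(\omega+\sigma)|^2.
\]
I would first differentiate the squared-modulus twice, obtaining the general formula
\[
\partial_j\partial_k|P|^2 \>=\> \partial_j\partial_k P\cdot\overline{P}+\partial_jP\cdot\overline{\partial_kP}+\partial_kP\cdot\overline{\partial_jP}+P\cdot\overline{\partial_j\partial_kP}.
\]

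Next I would evaluate the sum at $\omega=\bone$. The sum rules of order $2$ give $P(\sigma)=0$ and $\nabla P(\sigma)=0$ for every $\sigma\in G\setminus\{0\}$, so every summand with $\sigma\neq 0$ vanishes identically. Only the term $\sigma=0$ survives, and there $P(0)=p(\bone)=1$. This reduces the computation to
\[
\Hess(f)(\bone)_{jk}\>=\>-\bigl(\partial_j\partial_kP(0)+\partial_jP(0)\,\overline{\partial_kP(0)}+\partial_kP(0)\,\overline{\partial_jP(0)}+\overline{\partial_j\partial_kP(0)}\bigr).
\]

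The final step is to use the realness of the coefficients of $p$. Writing $P(\omega)=\sum_\alpha p(\alpha)e^{-i\alpha\cdot\omega}$ with $p(\alpha)\in\R$, one sees immediately that $\partial_jP(0)=-i\sum_\alpha p(\alpha)\alpha_j$ is purely imaginary while $\partial_j\partial_kP(0)=-\sum_\alpha p(\alpha)\alpha_j\alpha_k$ is real. Hence $\overline{\partial_jP(0)}=-\partial_jP(0)$ and $\overline{\partial_j\partial_kP(0)}=\partial_j\partial_kP(0)$. Substituting these identities collapses the four-term sum to $2\,\partial_j\partial_kP(0)-2\,\partial_jP(0)\,\partial_kP(0)$, and rewriting $-\partial_jP(0)\,\partial_kP(0)=\overline{\partial_jP(0)}\,\partial_kP(0)$ yields
\[
\Hess(f)(\bone)_{jk}\>=\>-2\,\partial_j\partial_kP(0)-2\,\overline{\partial_jP(0)}\,\partial_kP(0),
\]
which is exactly the $(j,k)$-entry of $-2\Hess(p)(\bone)-2\,\nabla p(\bone)^*\nabla p(\bone)$.

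No single step is a genuine obstacle; the only care needed is matching the conjugation convention (the fact that $*$ becomes complex conjugation on $\T^d$) with the reality of the coefficients, so that the mixed gradient terms combine correctly rather than cancelling. Everything else is routine bookkeeping enabled entirely by the order-$2$ sum-rule condition, which is what permits dropping all $\sigma\neq 0$ contributions from the sum defining $f$.
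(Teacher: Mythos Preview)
Your argument is correct and follows essentially the same route as the paper's proof: both use the sum rules of order~$2$ to eliminate the $\sigma\neq0$ summands, then exploit that real coefficients make $\nabla p(\bone)$ purely imaginary and $\Hess(p)(\bone)$ real. The only cosmetic difference is that the paper reaches the second-order information via a Taylor expansion of $p$ and then squares, whereas you differentiate $|P|^2$ directly with the product rule; these are equivalent bookkeeping choices.
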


\begin{proof}
We expand the trigonometric polynomial $p$ in a neighborhood of
 $\bone$ and get
$$
    p(\be^{-i\omega})=1+ \nabla p(\bone) \omega+
    \frac{1}{2}\omega^T \mbox{Hess}(p)(\bone) \omega
    +\mathcal{O}(|\omega|^3).
$$
Note that, since the coefficients of $p$ are real, the row vector $v=\nabla
p(\bone)$ is purely imaginary and $\mbox{Hess}(p)(\bone)$ is real and symmetric.
The sum rules of order $2$ are equivalent to
$$
    p^\sigma(\bone)= 0,\quad
    \nabla p^\sigma(\bone)=0\qquad
    \mbox{for all}\quad
    \sigma\in G\setminus\{ 0\}.
$$
Thus, we have $p^\sigma(\be^{-i\omega})=\mathcal{O}(|\omega|^2)$ for all
$\sigma\in G\setminus\{0\}$.  Simple computation yields
\begin{eqnarray*}
  |p(\be^{-i\omega})|^2&=&1+ (v+\ol{v}) \omega+ \omega^T (\mbox{Hess}(p)(\bone)+v^*v) \omega
    +\mathcal{O}(|\omega|^3) \\&=&1+  \omega^T (\mbox{Hess}(p)(\bone)+v^*v) \omega
    +\mathcal{O}(|\omega|^3).
\end{eqnarray*} Thus, the claim follows.
\end{proof}

\begin{Remark} Note that $\Hess(f)$ is a zero matrix, if
$p$ is a symbol of interpolatory subdivision scheme, i.e.,
$$
 p=m^{-1}+ m^{-1}\sum_{\chi \in G' \setminus \{0\}} p_\chi,
$$
and $p$ satisfies zero conditions of order at least $3$. This property of $\Hess(f)$
follows directly from the equivalent formulation of zero conditions of order
$k$, see \cite{Cabrelli}. The examples of $p$ with such properties are for
example the butterfly scheme in Example \ref{ex:butterfly} and  the
three-dimensional interpolatory scheme in Example \ref{ex:3D_butterfly}.
\end{Remark}

\begin{Remark}
The sufficient condition of Theorem \ref{hessiancrit} can be
generalized to cases when the order of vanishing of $f$ is larger than two.
Namely, let $V$ and $f$ as in \ref{hessiancrit}, and let $\xi\in
V(\R)$ be a zero of $f$. Fix a system $x_1,\dots,x_n$ of local
(analytic) coordinates on $V$ centered at $\xi$. Let $2d>0$ be the
order of vanishing of $f$ at $\xi$, and let $F_\xi(x_1,\dots,x_n)$ be
the homogeneous part of degree $2d$ in the Taylor expansion of $f$
at $\xi$. Let us say that $f$ is \emph{strongly sos at~$\xi$} if
there exists a linear basis $g_1,\dots,g_N$ of the space of
homogeneous polynomials of degree $d$ in $x_1,\dots,x_n$ such that
$F_\xi=g_1^2+\cdots+g_N^2$. (Equivalently, if $F_\xi$ lies in the
interior of the sums of squares cone in degree~$2d$.) If $2d=2$, this
condition is equivalent to the Hessian of $f$ at $\xi$ being positive
definite.

Then the following holds: If $f$ is strongly sos at each of its zeros
in $V(\R)$, $f$ is a sum of squares in $\R[V]$. For a proof we refer
to \cite{sch:future}. As a result, we get a corresponding
generalization of Theorem \ref{th:existence-ddim}: If $f$ as in
\ref{th:existence-ddim} is strongly sos at each of its zeros in
$\T^d$, then the conclusion of \ref{th:existence-ddim} holds.
\end{Remark}

For simplicity of presentation, we start by applying the result of Theorem
\ref{th:existence-ddim} to the $2-$dimensional polynomial $f$ derived from the symbol of the three-directional
piecewise linear box spline. This example also motivates the statements of  Remark
\ref{remark:box_and_cosets_sec2.2}.

\begin{Example} \label{example:b111-sec2.2}
The three-directional piecewise linear box spline is defined by its associated trigonometric
polynomial
$$
   p(\be^{-i\omega})= e^{-i(\omega_1+\omega_2)} \cos\left(\frac{\omega_1}{2}\right)
   \cos\left(\frac{\omega_2}{2}
   \right)\cos\left(\frac{\omega_1+\omega_2}{2}\right), \quad
   \omega \in \R^2.
$$
Note that
$$
    \cos\left(\frac{\omega_1}{2}\right)
   \cos\left(\frac{\omega_2}{2}
   \right)\cos\left(\frac{\omega_1+\omega_2}{2}\right)
    =1-\frac{1}{8}\omega^T \left(\begin{matrix} 2&1\\1&2\end{matrix}\right) \omega +
    \mathcal{O}(|\omega|^4).
$$
Therefore, as the trigonometric polynomial $p$ satisfies sum rules
of order $2$, we get
$$
 f(\be^{-i\omega})=\frac{1}{8}\omega^T \left(\begin{matrix} 2&1\\1&2\end{matrix}\right) \omega +
    \mathcal{O}(|\omega|^4).
$$
Thus, the Hessian of $f$ at $\bone$ is positive definite.

To determine the other zeroes of $f$, by Lemma \ref{lem:subQMFiso} part $(a)$, we
can use either one of the representations
\begin{eqnarray*}
 f(\be^{-i\omega})&=&1-\sum_{\sigma \in \{0,\pi\}^2} \prod_{\theta \in \{0,1\}^2 \setminus \{0\}}
 \cos^2\left(\frac{(\omega+ \sigma)\cdot \theta}{2}\right)\\
 &=& \frac{1}{4} \sum_{\chi \in \{0,1\}^2}(1-\cos^2(\omega \cdot \chi)).
\end{eqnarray*}
It follows that the zeros of $f$ are the points $\omega \in
\pi \Z^2$ and, by periodicity of $f$ with period $\pi$ in both coordinate directions,
we get that
$$
   \mbox{Hess}(f)(\be^{-i\omega})=\mbox{Hess}(f)(\bone), \quad \omega \in
\pi \Z^2,
$$
is positive definite at all zeros of $f$.
\end{Example}

\begin{Remark} \label{remark:box_and_cosets_sec2.2}

\noindent $(i)$ The result of \cite[Theorem 2.4]{CS07} implies the existence of
tight frames for multivariate box-splines. According to the notation in
\cite[p.~127]{deBoor}, the corresponding trigonometric polynomial is given by
$$
  p(\be^{-i\omega})=\prod_{j=1}^n \frac{1+\be^{-i\omega \cdot \xi^{(j)}}}{2},
  \quad \omega \in \R^d,
$$
where $\Xi=(\xi^{(1)},\ldots,\xi^{(n)})\in\Z^{d\times n}$ is unimodular and has
rank $d$. (Unimodularity means that all $d\times d$-submatrices have determinant
$0,1$, or $-1$.) Moreover, $\Xi$ has the property that leaving out any  column
$\xi^{(j)}$ does not reduce its rank. (This property guarantees continuity of
the box-spline and that the corresponding polynomial $p$ satisfies at least sum
rules of order $2$.) Then one can show that
$$
  f= 1-\sum_{\sigma \in G}p^{\sigma*}p^\sigma \ge  0 \quad \hbox{on} \ \T^d,
$$
the zeros of $f$ are at $\omega \in \pi \Z^d$ and the Hessian of $f$ at
these zeros is positive definite. This yields an alternative proof for
\cite[Theorem 2.4]{CS07} in the case of box splines.

\noindent $(ii)$ If the summands $m^{-2}-p_\chi^* p_\chi$ are
nonnegative on $\T^d$, then it can be easier to determine the
zeros of $f$ by determining the common zeros of all of these summands.
\end{Remark}

\begin{Example} \label{ex:3D_butterfly}
There was an attempt to define an interpolatory scheme for 3D-subdivision with dilation matrix $2I_3$
in \cite{CMQ}. There are several inconsistencies in this paper and we give a correct description of the trigonometric polynomial
$p$, the so-called subdivision mask. Note that the scheme  we present is an extension of the 2-D
butterfly scheme to 3-D data in the following sense: if the data are constant along
one of the coordinate directions (or along the main diagonal in $\R^3$), then
the subdivision procedure keeps this property  and is identical with the
2-D butterfly scheme.

We describe the trigonometric polynomial $p$ associated with this 3-D scheme
by defining its isotypical components.  The  isotypical components,
in terms of $z_k=e^{-i\omega_k}$, $k=1,2$, are given by
\small
\begin{eqnarray*}
   p_{0,0,0}(z_1,z_2,z_3)&=&1/8,\\[12pt]
   p_{1,0,0}(z_1,z_2,z_3) &=& \frac{1}{8} \cos\omega_1 +
   \frac{\lambda}{4} \Big(\cos(\omega_1+2\omega_2)+\cos(\omega_1+2\omega_3)\\&+&\cos(\omega_1+2\omega_2+2\omega_3)\Big)
   -  \frac{\lambda}{4} \Big(\cos(\omega_1-2\omega_2)+\cos(\omega_1-2\omega_3)\\&+&
   \cos(3\omega_1+2\omega_2+2\omega_3)\Big),\\[12pt]
     p_{0,1,0}(z_1,z_2,z_3) &=&    p_{1,0,0}(z_2,z_1,z_3),\
     p_{0,0,1}(z_1,z_2,z_3) =    p_{1,0,0}(z_3,z_1,z_2),\\
     p_{1,1,1}(z_1,z_2,z_3) &=&    p_{1,0,0}(z_1z_2z_3,z_1^{-1},z_2^{-1}),\\[12pt]
     p_{1,1,0}(z_1,z_2,z_3) &=& \Big(\frac{1}{8}-\lambda \Big) \cos(\omega_1+\omega_2) +
     \lambda \Big(\cos(\omega_1-\omega_2)+\cos(\omega_1+\omega_2+2\omega_3)\Big)\\
    &-&\frac{\lambda}{4} \Big(\cos(\omega_1-\omega_2+2\omega_3)+\cos(\omega_1-\omega_2-2\omega_3)+ \\
   && \cos(3\omega_1+\omega_2+2\omega_3)+\cos(\omega_1+3\omega_2+2\omega_3)\Big),\\[12pt]
      p_{1,0,1}(z_1,z_2,z_3) &=&    p_{1,1,0}(z_1,z_3,z_2),\qquad
     p_{0,1,1}(z_1,z_2,z_3) =    p_{1,0,0}(z_2,z_3,z_1),
\end{eqnarray*}
\normalsize where $\lambda$ is the so-called tension parameter.

The polynomial $p$ also satisfies
$$
  p(z_1,z_2,z_3)=\frac{1}{8} (1+z_1)(1+z_2)(1+z_3)(1+z_1z_2z_3)q(z_1,z_2,z_3), \quad q(\bone)=1,
$$
which implies sum rules of order $2$.

\begin{itemize}
\item[$(a)$] For $\lambda=0$, we have
$q(z_1,z_2,z_3)=1/(z_1z_2z_3)$. Hence, $p$ is the scaling symbol of the trivariate
 box spline with the  direction set $(1,0,0)$, $(0,1,0)$, $(0,0,1)$, $(1,1,1)$ and whose support center
is shifted to the origin.

\item[$(b)$] For $0\le \lambda< 1/16$, the corresponding subdivision scheme converges and
has a continuous limit function. The only zeros of the associated nonnegative
trigonometric polynomial $f$ are at $\pi \Z^3$, and the Hessian of $f$
at these zeros is given by
$$
  \hbox{Hess}(f)(\bone)=\hbox{Hess}(f)(\be^{-i\omega})=\left( \begin{array}{ccc} 1-16\lambda &\frac{1}{2}-8\lambda&\frac{1}{2}-8\lambda\\
  \frac{1}{2}-8\lambda&1-16\lambda&\frac{1}{2}-8\lambda
\\\frac{1}{2}-8\lambda&\frac{1}{2}-8\lambda&1-16\lambda \end{array}\right)
$$
for all  $\omega \in \pi \Z^3$.
The existence of the sos decomposition of $f$ is guaranteed by Theorem \ref{th:existence-ddim} and
one possible decomposition of $f$ is computed as follows.

 \begin{itemize}
 \item[$(b_1)$] Denote $u:=\cos(\omega_1+\omega_2)$, $v:=\cos(\omega_1+\omega_3)$,
 $w:=\cos(\omega_2+\omega_3)$, and $\tilde u:=\sin(\omega_1+\omega_2)$, $\tilde v:=\sin(\omega_1+\omega_3)$,
 $\tilde w:=\sin(\omega_2+\omega_3)$.
 Simple computations yield
 \[
    p_{1,1,0} =\frac18 -(1-u)(\frac18-\lambda v^2-\lambda w^2)-\lambda (v-w)^2,
 \]
and
\begin{eqnarray*}
    \frac{1}{64}&-&|p_{1,1,0}|^2=
    \lambda^2 (v^2-w^2)^2 + \Big(\Big(\frac1{16}-\lambda v^2\Big) \\&+&
    \Big(\frac1{16}-\lambda w^2\Big)\Big)
    \left(\frac1{8}\tilde u^2+\lambda (v-uw)^2+\lambda(w-uv)^2\right).
\end{eqnarray*}
Therefore, $\frac{1}{64}-|p_{1,1,0}|^2$ has an sos decomposition with $7$ summands $h_j$, and each $h_j$ has
only one nonzero isotypical component.

\item[$(b_2)$] The  isotypical component $p_{1,0,0}$ is not bounded by $1/8$;
consider, for example,
$p_{1,0,0}(\be^{-i\omega})$ at the point $\omega=\left( -\frac{\pi}{6}, -\frac{2\pi}{3}, -\frac{2\pi}{3}\right)$.
Yet we obtain, by simple computations,
\[
   p_{1,0,0} = \frac18\cos\omega_1+\frac{\lambda}{2} A \sin\omega_1,\
    A:=\sin 2(\omega_1+\omega_2+\omega_3)-\sin 2\omega_2-\sin 2\omega_3,
\]
and
\[
    \frac{1}{16}-|p_{1,0,0}|^2-|p_{0,1,0}|^2-|p_{0,0,1}|^2-|p_{1,1,1}|^2 =
    E_{1,0,0}+E_{0,1,0}+E_{0,0,1}+E_{1,1,1},
 \]
where
\[
   E_{1,0,0}= \frac{3\lambda}{16} \sin^4\omega_1 +
   \frac{\lambda}{64} (2\sin\omega_1- A \cos\omega_1)^2 +
   \frac{1-16\lambda}{64} \sin^2\omega_1 (1+\lambda A^2) ;
\]
the other $E_{i,j,k}$ are given by the same coordinate transformations
as $p_{i,j,k}$. Hence, for $ \frac{1}{16}-|p_{1,0,0}|^2-|p_{0,1,0}|^2-|p_{0,0,1}|^2-|p_{1,1,1}|^2$, we obtain an sos
decomposition  with $12$ summands $g_j$, each of which has only one nonzero isotypical component.
\end{itemize}

Thus, for the trivariate interpolatory subdivision
scheme with tension parameter $0\le\lambda<1/16$, by Theorem~\ref{th:LaiSt},
we have explicitly constructed a tight frame with 41 generators $q_j$
as in Theorem \ref{th:UEP}.

\item[c)]
For $\lambda=1/16$, the sum rules of order $4$
are satisfied.
 In this particular case,
 the scheme is  $C^1$  and the Hessian of $f$ at $\bone$ is the zero-matrix,
 thus the result
 of Theorem~\ref{th:existence-ddim} is not applicable. Nevertheless,  the sos decomposition
 of
 $1-\sum p^{\sigma *} p^\sigma$ in b), with further simplifications
 for $\lambda=1/16$,
gives a tight frame with 31 generators
for the trivariate interpolatory subdivision
scheme.
\end{itemize}
\end{Example}

\subsection{Constructions of tight wavelet frames} \label{subsec:construction}

Lemma \ref{lem:subQMFiso} part $(a)$ sometimes yields an  elegant
method for determining the sum of squares decomposition of the polynomial $f$ in \eqref{def:f} and, thus,
constructing the trigonometric polynomials $q_j$ in Theorem \ref{th:UEP}. Note that
\begin{equation} \label{idea:method}
 f\>=1-\sum_{\sigma \in G} p^{\sigma *} p^\sigma \>=\>1-m\sum_{\chi\in G'} p_\chi^* p_\chi \>=\>m\sum_{\chi\in G'}
 \Bigl(\frac1{m^2}-p_\chi^* p_\chi \Bigr).
\end{equation}
So it suffices to find an sos decomposition for each of
the polynomials $m^{-2}-p_\chi^* p_\chi$, provided that they are all
nonnegative. This nonnegativity assumption is satisfied, for
example, for the special case when all coefficients $p(\alpha)$ of
$p$ are nonnegative. This is due to the simple fact that for
nonnegative $p(\alpha)$ we get
$$
 p^*_\chi p_\chi\>\le\>|p_\chi(\bone)|^2\>=\>m^{-2}
$$
on $\T^d$, for all $\chi\in G'$.

The last equality in \eqref{idea:method} allows us to simplify the construction of frame generators
considerably. In Example \ref{example:b111_2_construction} we apply
this method to the  three-directional piecewise linear box spline. Example \ref{ex:butterfly} illustrates the advantage of the representation
in \eqref{idea:method} for the butterfly scheme \cite{GDL}, an interpolatory subdivision method
with the corresponding mask $p \in \C[T]$ of a larger support, some of whose coefficients are negative. Example \ref{ex:jiang_oswald} shows that
our method is also applicable for at least one of the interpolatory $\sqrt{3}-$subdivision schemes studied in \cite{JO}.
For the three-dimensional
example that also demonstrates our constructive approach see Example \ref{ex:3D_butterfly} part $(b1)$.

\begin{Example}\label{example:b111_2_construction}%
Consider the three-directional piecewise linear box spline with the
symbol
$$p(z_1,z_2)\>=\>\frac18\,(1+z_1)(1+z_2)(1+z_1z_2),\quad z_j=
e^{-i\omega_j}.$$
The sos decomposition for the isotypical components yields
$$
 f\>=\>1-m\sum_{\chi\in G'} p_\chi^* p_\chi \>=\>\frac14\sin^2(\omega_1)+
 \frac14\sin^2(\omega_2)+\frac14\sin^2({\omega_1+\omega_2}).
$$
Thus, in \eqref{id:tildeH} we have a decomposition with $r=3$.
Since each of $h_1$, $h_2$, $h_3$ has only one isotypical
component, we get a representation $f=\tilde h_1^2+\tilde
h_2^2+\tilde h_3^2$ with $3$ $G$-invariant polynomials $\tilde
h_j$. By Theorem
\ref{th:LaiSt} we get $7$ frame generators. Note that the
method in \cite[Example 2.4]{LS06} yields $6$ generators of
slightly larger support. The method in \cite[Section 4]{CH} based on
properties of the Kronecker product leads to $7$ frame generators
whose support is the same as the one of $p$. One can also employ the
technique discussed in \cite[Section]{GR} and get $7$ frame
generators.
\end{Example}

Another prominent example of a subdivision scheme is the so-called
butterfly scheme. This example shows  the real advantage of
treating the isotypical components of $p$ separately for $p$ with larger support.

\begin{Example} \label{ex:butterfly}
The butterfly scheme describes an interpolatory subdivision scheme
that generates a smooth regular surface
interpolating a given set of points \cite{GDL}. The trigonometric
polynomial $p$ associated with the butterfly scheme is given by
\begin{eqnarray*}
p(z_1,z_2) & = & \frac{1}{4}+\frac{1}{8}\Bigl(z_1+z_2+z_1z_2+z_1^{-1}+z_2^{-1}+
  z_1^{-1}z_2^{-1}\Bigr) \\
&& +\frac{1}{32}\Bigl(z_1^2z_2+z_1z_2^2+z_1z_2^{-1}+z_1^{-1}z_2+z_1^{-2}
  z_2^{-1}+z_1^{-1}z_2^{-2}\Bigr) \\
&& -\frac1{64}\Bigl(z_1^3z_2+z_1^3z_2^2+z_1^2z_2^3+z_1z_2^3+z_1^2
  z_2^{-1}+z_1z_2^{-2} \\
&& +z_1^{-1}z_2^2+z_1^{-2}z_2+z_1^{-3}z_2^{-1}+z_1^{-3}z_2^{-2}+
  z_1^{-2}z_2^{-3}+z_1^{-1}z_2^{-3}\Bigr).
\end{eqnarray*}
Its first isotypical component is $p_{0,0}=\frac14$, which is the case for every
interpolatory subdivision scheme. The other isotypical components, in terms of
$z_k=e^{-i\omega_k}$, $k=1,2$, are given by $p_{1,0}
(z_1,z_2)=\frac14\cos(\omega_1)+\frac1{16}\cos(\omega_1+2\omega_2)
-\frac1{32}\cos(3\omega_1+2\omega_2)-\frac1{32}\cos(\omega_1 -2\omega_2)$, i.e.,
$$
 p_{1,0}(z_1,z_2)\>=\>\frac14\cos(\omega_1)+\frac18\sin^2(\omega_1)
 \cos(\omega_1+2\omega_2),
$$
and $p_{0,1}(z_1,z_2)=p_{1,0}(z_2,z_1)$, $p_{1,1}(z_1,z_2)=p_{1,0}
(z_1z_2,z_2^{-1})$. Note that on $\T^2$
$$
 |p_\chi|\le\frac14 \quad  \hbox{for all}  \quad \chi\in G',
$$
thus, our method is applicable. Simple computation shows that
\begin{eqnarray*}
1-16\,\bigl|p_{1,0}(z_1,z_2)
\bigr|^2&=&1-\cos^2(\omega_1)-\cos(\omega_1)\sin^2(\omega_1)\cos
(\omega_1+2\omega_2)\\&-&\frac14\sin^4(\omega_1)\cos^2(\omega_1+ 2\omega_2).
\end{eqnarray*}
Setting $u_j:=\sin(\omega_j)$, $j=1,2$, $v:= \sin(\omega_1+\omega_2)$,
$v':=\sin(\omega_1-\omega_2)$, $w:=\sin (\omega_1+2\omega_2)$ and $w':=\sin
(2\omega_1+\omega_2)$, we get
$$1-16\,\bigl|p_{1,0}(z_1,z_2)\bigr|^2\>=\>\frac14\,u_1^2\Bigl(w^2+
(u_2^2+v^2)^2+2u_2^2+2v^2\Bigr).$$
Therefore,
\begin{eqnarray*}
f=1-\sum_{\sigma\in G} p^{\sigma *} p^\sigma & = & \frac14\Bigl(u_1^2u_2^2+u_1^2
  v^2+u_2^2v^2\Bigr)+\frac1{16}\Bigl(u_1^2w^2+u_2^2w'^2+v^2v'^2
  \Bigr) \\
&& +\frac1{16}\Bigl(u_1^2(u_2^2+v^2)^2+u_2^2(u_1^2+v^2)^2+v^2(u_1^2
  +u_2^2)^2\Bigr).
\end{eqnarray*}
This provides a decomposition $\displaystyle f=
\sum_{j=1}^9 h_j^* h_j$ into a sum of $9$ squares. As in the previous example, each
$h_j$ has only one nonzero isotypical component $h_{j,\chi_j}$. Thus, by part $(b)$
of Lemma \ref{lem:subQMFiso} and by Theorem \ref{th:LaiSt}, there
exists a tight frame with $13$ generators. Namely, as in the proof of Theorem \ref{th:LaiSt},
we get
\begin{eqnarray*}
 q_1(z_1,z_2)&=&\frac{1}{2}-\frac{1}{2}p(z_1,z_2), \quad q_2(z_1,z_2)=
 \frac{1}{2}z_1-2 p(z_1,z_2)p_{(1,0)}^*(z_1,z_2) \\
 q_3(z_1,z_2)&=&q_2(z_2,z_1), \quad q_4(z_1,z_2)=q_2(z_1z_2,z_2^{-1})\\
 q_{4+j}(z_1,z_2)&=&p(z_1,z_2) \widetilde{h}^*_{j,\chi_j}, \quad j=1, \dots, 9,
\end{eqnarray*}
where $\widetilde{h}_{j,\chi_j}$ are the lifted isotypical components defined as
in Lemma \ref{lem:subQMFiso}. Let $\cN=\{0, \dots ,7\}^2$, $p=\bp \cdot
\bx$ and $q_j=\bq_j \cdot \bx$ with $\bx= [
z^\alpha \ : \ \alpha \in \cN]^T$. The corresponding null-matrix $O \in \R^{64
\times 64}$ satisfying \eqref{id:null-matrices} is given by
$$
 \bx^* \cdot  O \cdot \bx= \bx^* \left[
 \sum_{j=1}^{13} \bq_j^T \bq_j -\hbox{diag}(\bp)+\bp^T \bp \right] \bx.
$$
Note that other factorizations of the positive semi-definite matrix
$\hbox{diag}(\bp)-\bp^T \bp+O$ of rank $13$ lead to other possible tight frames
with at least $13$ frame generators. An advantage of using semi-definite
programming techniques is that it can possibly yield $q_j$ of smaller degree and
reduce the rank of $\hbox{diag}(\bp)-\bp^T \bp+O$.

Using the technique of semi-definite programming the authors in \cite{CS08}
constructed numerically a tight frame for the butterfly scheme with 18 frame
generators. The advantage of our construction is that the frame
generators are determined analytically. The disadvantage is that their support
is approximately twice as large as that of the frame generators in  \cite{CS08}.
\end{Example}

The next example is one of the family of interpolatory $\sqrt{3}-$subdivision studied in \cite{JO}.
The associated dilation matrix is $\displaystyle{M=\left[\begin{array}{rr} 1&2\\-2&-1 \end{array} \right]}$
and $m=3$.

\begin{Example} \label{ex:jiang_oswald}
The symbol of the scheme is given by
$$
 p(z_1,z_2)= p_{(0,0)}(z_1,z_2)+ p_{(1,0)}(z_1,z_2)+ p_{(0,1)}(z_1,z_2)
$$
with isotypical components $p_{(0,0)}=\frac{1}{3}$,
\begin{eqnarray*}
   p_{(0,1)}(z_1,z_2)=\frac{4}{27}(z_2+z_1^{-1}+z_1z_2^{-1})-\frac{1}{27}(z_1^{-2}z_2^{2}+z_1^2+z_2^{-2})
\end{eqnarray*}
and $  p_{(1,0)}(z_1,z_2)=p_{(0,1)}(z_2,z_1)$. We have by Lemma
\ref{lem:subQMFiso} and due to the equality $|p_{(0,1)}(z_1,z_2)|^2=|p_{(1,0)}(z_1,z_2)|^2$
$$
 1-\sum_{\sigma \in G} p^{\sigma *} p^\sigma=2\left( \frac19-p_{(0,1)}^*p_{(0,1)}\right),
$$
thus it suffices to consider only
\begin{eqnarray*}
 \frac{1}{9}&-&|p_{(0,1)}(z_1,z_2)|^2=3^{-2}-27^{-2} \Big(51+16\cos(\omega_1+\omega_2)+16\cos(2\omega_1-\omega_2) \\
&+&16\cos(\omega_1-2\omega_2)+2\cos(2\omega_1+2\omega_2)+2\cos(2\omega_1-4\omega_2)\\&+&2\cos(4\omega_1-2\omega_2)
-8\cos(3\omega_1)-8\cos(3\omega_2)-8\cos(3\omega_1-3\omega_2\Big).
\end{eqnarray*}
Numerical tests show that this polynomial is nonnegative.
\end{Example}


\end{document}